\DeclareMathOperator*{\essinf}{ess\,inf}
\DeclareMathOperator*{\esssup}{ess\,sup}
\DeclareMathOperator*{\argmax}{arg\,max}
\newtheorem{theorem}{Theorem}[section]
\newtheorem{lemma}[theorem]{Lemma}
\newtheorem{proposition}[theorem]{Proposition}
\newtheorem{definition}[theorem]{Definition}
\newtheorem{remark}[theorem]{Remark}
\newtheorem{assumption}[theorem]{Assumption}
\newcommand{\msc}[1]{\textbf{MSC2010 Classification:} #1.}
\newcommand{\keywords}[1]{\textbf{Key words:} #1.}
\begin{document}
\title{\textbf{
Dynamic programming for discrete-time finite horizon optimal switching problems with negative switching costs}\footnote{This research was partially supported by EPSRC grant EP/K00557X/1.}}

\author{Randall Martyr\footnote{School of Mathematics, University of Manchester, Oxford Road, Manchester M13 9PL, United Kingdom. email: \texttt{randall.martyr@postgrad.manchester.ac.uk}}}
\maketitle
\begin{abstract}
This paper studies a discrete-time optimal switching problem on a finite horizon. The underlying model has a running reward, terminal reward and signed (positive and negative) switching costs. Using the martingale approach to optimal stopping problems, we extend a well known explicit dynamic programming method for computing the value function and the optimal strategy to the case of signed switching costs.
\end{abstract}
\msc{93E20, 60G40, 91B99, 62P20}
\vspace{+4pt}

\noindent\keywords{optimal switching, real options, stopping times, optimal stopping problems, Snell envelope} 
\section{Introduction}\label{Section:Introduction}
The relatively recent papers \cite{Yushkevich2002,Guo2008b} have shown the connection between Dynkin games and optimal switching problems where negative switching costs are allowed. In particular, \cite{Guo2008b} proved that the value of the Dynkin game is equal to the difference of the value functions for the two-mode optimal switching problem. However, there is no rigorous derivation of the dynamic programming algorithm for computing the value function in the case of signed (positive and negative) switching costs. In this paper, we resolve this issue by using a martingale approach to the optimal switching problem in discrete time.
\subsection{Literature review}
There are relatively few theoretical results on the dynamic programming method for optimal switching in discrete time. The discrete-time optimal switching problem with multiple modes was used in \cite{Carmona2008,Gassiat2012} as an approximation to the solution of a continuous time problem. The dynamic programming algorithm advocated in those papers follows from the backward induction method for solving optimal stopping problems (Chapter~\MakeUppercase{\romannumeral 1}, Section 1.1 of \cite{Peskir2006}), and requires the existence of a system of Snell envelope processes which solve the continuous-time optimal switching problem. Furthermore, their arguments for proving the existence of these processes requires strictly positive switching costs (see \cite{Martyr2014b} for related work where negative switching costs are allowed). A similar backward induction formula for the value function of an optimal switching problem with two modes and strictly positive, constant switching costs was obtained in \cite{Tanaka1990} under general non-Markovian assumptions. The paper \cite{Yushkevich2002}, which studied the optimal switching problem with two modes in a Markovian model, obtained a different type of dynamic programming equation for the value function -- one which is more in the spirit of the Wald-Bellman equations (Chapter~\MakeUppercase{\romannumeral 1}, Section 1.2 of \cite{Peskir2006}).

\subsection{Aim and results}
This paper looks at optimal switching for a finite-horizon discrete-time model which has a running reward, terminal reward and allows for negative switching costs. Our solution method is based on the martingale approach to optimal stopping problems. We provide the discrete-time analogue of the verification theorem of \cite{Djehiche2009} established for the continuous-time problem, and also justify and extend the dynamic programming method of \cite{Carmona2008,Gassiat2012} for computing the value function and optimal strategy to the case of signed switching costs. Numerical examples which utilize these results can be found in \cite{Martyr2014c}.

\section{Discrete-time optimal switching}\label{Section:Discrete-Time-Optimal-Switching}
\subsection{Definitions}
\subsubsection{Probabilistic setup.}
Let $(\Omega,\mathcal{F},\mathsf{P})$ be a complete probability space which is given. The expectation operator with respect to $\mathsf{P}$ is denoted by $\mathsf{E}$, and the indicator function of a set or event $A$ is written as $\mathbf{1}_{A}$. Let $\mathbb{T} = \lbrace 0,1,\ldots,T \rbrace$ represent a sequence of integer-valued times with $0 < T < \infty$. The probability space is equipped with a filtration $\mathbb{F} = (\mathcal{F}_{t})_{t \in \mathbb{T}}$, and it is assumed that $\mathcal{F}_{0}$ is the trivial $\sigma$-algebra, $\mathcal{F}_{0} = \lbrace \emptyset, \Omega \rbrace$, and $\mathcal{F} = \mathcal{F}_{T}$. The notation a.s. stands for ``almost-surely''. For a given $\mathbb{F}$-stopping time $\nu$, the notation $\mathcal{T}_{\nu}$ is used for the set of $\mathbb{F}$-stopping times $\tau$ such that $\nu \le \tau \le T$ \hspace{2bp} $\mathsf{P}$-a.s. Martingales, stopping times and other relevant concepts are understood to be defined with respect to the filtered probability space $(\Omega,\mathcal{F},\mathbb{F},\mathsf{P})$. The usual convention to suppress the dependence on $\omega \in \Omega$ is used below.

\subsubsection{Optimal switching definitions.}
The following data for the optimal switching problem are given:
\begin{enumerate}
	\item A set of operational modes $\mathbb{I} = \left\lbrace 1,2,\ldots,m \right\rbrace$, where $2 \le m < \infty$;
	\item A reward received at time $T$ for being in mode $i \in \mathbb{I}$, which is modelled by an $\mathcal{F}_{T}$-measurable real-valued random variable $\Gamma_{i}$;
	\item A running reward received while in mode $i \in \mathbb{I}$, which is represented by a real-valued adapted process $\Psi_{i} = \left(\Psi_{i}(t)\right)_{t \in \mathbb{T}}$;
	\item A cost for switching from mode $i \in \mathbb{I}$ to $j \in \mathbb{I}$, which is modelled by a real-valued adapted process $\gamma_{i,j} = \left(\gamma_{i,j}(t)\right)_{t \in \mathbb{T}}$.
\end{enumerate}

Define a class of \emph{admissible switching controls} as follows:
\begin{definition}\label{Definition:AdmissibleSwitchingControl}
	Let $t \in \mathbb{T}$ and $i \in \mathbb{I}$ be given. An admissible switching control starting from time $t$ in mode $i$ is a sequence $\alpha = \left(\tau_{n},\iota_{n}\right)_{n \ge 0}$ with the following properties: 
	
	\begin{enumerate}
		\item For $n \ge 0$, $\tau_{n} \in \mathcal{T}_{t}$ and satisfies $t = \tau_{0} \le \tau_{1} \le \tau_{2} \le \ldots \le T$; if $n \ge 1$ then $\mathsf{P}\left(\{\tau_{n} < T \} \cap \{\tau_{n} = \tau_{n+1}\}\right) = 0$.
		\item For $n \ge 0$, each $\iota_{n} \colon \Omega \to \mathbb{I}$ is $\mathcal{F}_{\tau_{n}}$-measurable; $\iota_{0} = i$ and $\iota_{n} \neq \iota_{n+1}$  $\mathsf{P}$-a.s. for $n \ge 0$.
	\end{enumerate}
	
	Let $\mathcal{A}_{t,i}$ denote the class of admissible switching controls (also called strategies) for the initial condition $(t,i) \in \mathbb{T} \times \mathbb{I}$.
\end{definition}

The switching control $\alpha = (\tau_{n},\iota_{n})_{n \ge 0}$ models the controller's decision to switch at time $\tau_{n}$, $n \ge 1$, from the active mode $\iota_{n-1}$ to another one $\iota_{n}$. The condition $\mathsf{P}\left(\{\tau_{n} < T \} \cap \{\tau_{n} = \tau_{n+1}\}\right) = 0$ for $n \ge 1$ means there is at most one switch at each time. The above definition is similar to the one given in \cite[p.~145]{Yushkevich2002}.
\begin{definition}\label{Definition:Discrete-Time-OS-ModeIndicatorEtc}
	Associated with each $\alpha = \left(\tau_{n},\iota_{n}\right)_{n \ge 0} \in \mathcal{A}_{t,i}$ are the following objects:
	\begin{itemize}
		\item a mode indicator function ${\mathbf{u} \colon \Omega \times \mathbb{T} \to \mathbb{I}}$ defined by,
		\begin{equation}\label{eq:Discrete-Time-OS-ModeIndicator}
		\mathbf{u}_{s} = \sum\limits_{n \ge 0}\iota_{n}\mathbf{1}_{\lbrace \tau_{n} \le s < \tau_{n+1}\rbrace},\hspace{1em} s \ge t.
		\end{equation}
		\item the (random) total number of switches before $T$
		\begin{equation}\label{eq:OS-Discrete-Time-RandomNumberOfSwitches}
		N(\alpha) = \sum_{n \ge 1}\mathbf{1}_{\lbrace \tau_{n} < T \rbrace}
		\end{equation}
		\item the last mode switched to before $T$
		\begin{equation}\label{eq:OS-Discrete-Time-Last-Mode-Switched-To}
		\iota_{N(\alpha)} = \sum_{n \ge 0}\iota_{n}\mathbf{1}_{\lbrace \tau_{n} < T \rbrace}\mathbf{1}_{\lbrace \tau_{n+1} = T \rbrace}, \quad \text{a.s.}
		\end{equation}
	\end{itemize}
\end{definition}

Note that the mode indicator function $\mathbf{u}$ may jump immediately from $\iota_{0}(\omega) = i$ to $\iota_{1}(\omega) \neq i$ if $\tau_{1}(\omega) = t$.

\subsection{The optimal switching problem}
Define the following performance index for switching controls with initial mode $i \in \mathbb{I}$ at time $t \in \mathbb{T}$:
\begin{equation}\label{Definition:Discrete-TimeSwitchingControlObjective}
J(\alpha;t,i) = \mathsf{E}\left[\sum_{s=t}^{T-1}\Psi_{\mathbf{u}_{s}}(s) + \Gamma_{\iota_{N(\alpha)}} - \sum\limits_{n \ge 1}\gamma_{\iota_{n-1},\iota_{n}}\left(\tau_{n}\right)\mathbf{1}_{\lbrace \tau_{n} < T \rbrace} \biggm \vert \mathcal{F}_{t}\right],\hspace{1em}\alpha \in \mathcal{A}_{t,i}
\end{equation}
The optimisation problem is to maximise the objective function $J(\alpha;t,i)$ over all admissible controls $\alpha \in \mathcal{A}_{t,i}$. The value function $V$ for the optimal switching problem is defined as a random function of the initial time and mode $(t,i)$:
\begin{equation}\label{eq:SwitchingControlValueFunction}
V(t,i) = \esssup\limits_{\alpha \in \mathcal{A}_{t,i}} J(\alpha;t,i).
\end{equation}
A switching control $\alpha^{*} \in \mathcal{A}_{t,i}$ is said to be optimal if it achieves the essential supremum in equation~\eqref{eq:SwitchingControlValueFunction}: $\mathsf{P}$-a.s.,
\begin{align*}
V(t,i) = J(\alpha^{*};t,i) \ge J(\alpha;t,i) \hspace{1 em}\forall \alpha \in \mathcal{A}_{t,i}.
\end{align*}

\begin{remark}
	Note that the analogous minimisation problem can be treated by negating the performance index~\eqref{Definition:Discrete-TimeSwitchingControlObjective} and value function~\eqref{eq:SwitchingControlValueFunction}. This follows from the relationship between the essential supremum and essential infimum: if $\Phi$ is a set of random variables on $(\Omega,\mathcal{F},\mathsf{P})$, the essential infimum of $\Phi$ with respect to $\mathsf{P}$ is defined as \cite[p.~496]{Follmer2011}:
	\[
	\essinf \Phi = \essinf_{\varphi \in \Phi} \varphi \coloneqq -\esssup_{\varphi \in \Phi}(-\varphi).
	\]
\end{remark}

\begin{remark}
	Processes or functions with super(sub)-scripts in terms of the mode indicators $\lbrace \iota_{n} \rbrace$ are interpreted in the following way:
	\begin{align*}
	Y^{\iota_{n}} & = \sum\limits_{j \in \mathbb{I}}\mathbf{1}_{\lbrace \iota_{n} = j \rbrace} Y^{j} \\
	\gamma_{\iota_{n-1},\iota_{n}}\left(\cdot\right) & = \sum\limits_{j \in \mathbb{I}}\sum\limits_{k \in \mathbb{I}}\mathbf{1}_{\lbrace \iota_{n-1} = j \rbrace}\mathbf{1}_{\lbrace \iota_{n} = k \rbrace} \gamma_{j,k}\left(\cdot\right)
	\end{align*}
	Note that the summations are finite.
\end{remark}

\subsection{Notation, conventions and assumptions}
The convention that $\sum_{s = v}^{t}\left(\cdot\right) = 0$ for any integers $t$ and $v$ with $t < v$ is used. The following terminology is referred to in later developments:
\begin{itemize}
	\item For a constant $p \ge 1$, let $L^{p}$ denote the class of random variables $Z$ satisfying $\mathsf{E}\left[\left|Z\right|^{p}\right] < \infty$.
	\item Similarly, let $\mathcal{S}^{p}$ denote the class of adapted processes $X$ satisfying $\mathsf{E}\left[\max\limits_{t \in \mathbb{T}}\left|X_{t}\right|^{p}\right] < \infty$.
\end{itemize}

\subsubsection{Assumptions.}
\begin{assumption}\label{Assumption:Optimal-Switching-Discrete-Time-Integrability}
	For each $i \in \mathbb{I}$, $\Gamma_{i} \in L^{2}$ and is $\mathcal{F}_{T}$-measurable,  $\Psi_{i} \in \mathcal{S}^{2}$, and for every $j \in \mathbb{I}$, $\gamma_{i,j} \in \mathcal{S}^{2}$.
\end{assumption}

Assumption~\ref{Assumption:Optimal-Switching-Discrete-Time-Integrability} ensures that the performance index \eqref{Definition:Discrete-TimeSwitchingControlObjective} is well defined and allows application of optimal stopping theory later. The following standard assumption on the switching costs \cite{Yushkevich2002,Guo2008b} is also imposed:
\begin{assumption}\label{assumption:SwitchingCosts}
	For every $i,j,k \in \mathbb{I}$ and $\forall t \in \mathbb{T}$: $\mathsf{P}$-a.s.,
	\begin{align}
	i. \quad & \gamma_{i,i}\left(t\right) = 0 \label{assumption:Discrete-Time-SwitchingCosts-No-Cost} \\
	ii. \quad & \gamma_{i,k}\left(t\right) < \gamma_{i,j}\left(t\right) + \gamma_{j,k}\left(t\right),\hspace{1em} \text{ if } i \neq j \text{ and } j \neq k\label{assumption:Discrete-Time-SwitchingCosts-No-Arbitrage}.
	\end{align}
\end{assumption}

Condition~\eqref{assumption:Discrete-Time-SwitchingCosts-No-Cost} says there is no additional cost for staying in the same mode. The second condition ensures that when going from one mode $i$ to another mode $k$, it is never profitable to immediately visit an intermediate mode $j$. By taking $k = i$ and using \eqref{assumption:Discrete-Time-SwitchingCosts-No-Cost}, condition~\eqref{assumption:Discrete-Time-SwitchingCosts-No-Arbitrage} also shows that it is unprofitable to switch immediately back and forth between modes.

\section{Optimal stopping and Snell envelopes}\label{Section:DiscreteSnellEnvelopes}
This section collects some results on discrete-time optimal stopping problems which are important below.

\begin{proposition}\label{Proposition:DiscreteSnellEnvelopeProperties}
	Let $U = (U_{t})_{t \in \mathbb{T}}$ be an adapted, $\mathbb{R}$-valued process that satisfies $U \in \mathcal{S}^{1}$. Then there exists an adapted, integrable $\mathbb{R}$-valued process $Z = (Z_{t})_{t \in \mathbb{T}}$ such that $Z$ is the smallest supermartingale which dominates $U$. The process $Z$ is called the Snell envelope of $U$ and it enjoys the following properties.
	\begin{enumerate}
		\item For any $t \in \mathbb{T}$, $Z_{t}$ is defined by:
		\begin{equation}
		Z_{t} = \esssup_{\tau \in \mathcal{T}_{t}}\mathsf{E}\left[U_{\tau} \vert \mathcal{F}_{t}\right].
		\end{equation}
		Moreover, $Z$ can also be defined recursively as follows: $Z_{T} \coloneqq  U_{T}$ and \\ $Z_{t} \coloneqq U_{t} \vee \mathsf{E}\left[Z_{t+1} \big \vert \mathcal{F}_{t}\right]$ for $t = T-1,\ldots,0$.
		\item For any $\theta \in \mathcal{T}$, the stopping time $\tau^{*}_{\theta} = \inf\lbrace t \ge \theta \colon Z_{t} = U_{t} \rbrace$ is optimal after $\theta$ in the sense that:
		\begin{equation}\label{eq:SnellEnvelopeAtStoppingTimes}
		Z_{\theta} = \mathsf{E}\bigl[U_{\tau^{*}_{\theta}} \big\vert \mathcal{F}_{\theta}\bigr] = \esssup_{\tau \in \mathcal{T}_{\theta}}\mathsf{E}\bigl[U_{\tau} \big\vert \mathcal{F}_{\theta}\bigr],\hspace{1em} \mathsf{P}-\text{a.s.}
		\end{equation}
		\item For any $t \in \mathbb{T}$ given and fixed, the stopped process $\left(Z_{r \wedge \tau^{*}_{t}}\right)_{t \le r \le T}$ is a martingale.
	\end{enumerate}
\end{proposition}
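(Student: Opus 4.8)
The plan is to take the recursive (backward induction) formula as the \emph{definition} of $Z$ and then verify every stated property from it. First I would set $Z_{T} \coloneqq U_{T}$ and $Z_{t} \coloneqq U_{t} \vee \mathsf{E}[Z_{t+1} \vert \mathcal{F}_{t}]$ for $t = T-1,\ldots,0$, and check by downward induction that each $Z_{t}$ is $\mathcal{F}_{t}$-measurable and integrable; the bound $\lvert Z_{t}\rvert \le \mathsf{E}[\max_{s \in \mathbb{T}}\lvert U_{s}\rvert \,\vert\, \mathcal{F}_{t}]$ together with $U \in \mathcal{S}^{1}$ gives integrability (in fact $Z \in \mathcal{S}^{1}$). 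By construction $Z_{t} \ge U_{t}$ and $Z_{t} \ge \mathsf{E}[Z_{t+1}\vert\mathcal{F}_{t}]$, so $Z$ is a supermartingale dominating $U$. To see it is the smallest such, let $Y$ be any supermartingale with $Y \ge U$; a downward induction shows $Y_{t} \ge Z_{t}$: it holds at $t = T$ since $Y_{T} \ge U_{T} = Z_{T}$, and if $Y_{t+1} \ge Z_{t+1}$ then $Y_{t} \ge \mathsf{E}[Y_{t+1}\vert\mathcal{F}_{t}] \ge \mathsf{E}[Z_{t+1}\vert\mathcal{F}_{t}]$ while also $Y_{t} \ge U_{t}$, whence $Y_{t} \ge U_{t}\vee\mathsf{E}[Z_{t+1}\vert\mathcal{F}_{t}] = Z_{t}$.

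For the stopping-time characterisations I would first establish one easy inequality and then the matching one via a stopped martingale. Fix $\theta \in \mathcal{T}$. Since $Z$ is a supermartingale in $\mathcal{S}^{1}$, optional sampling (valid here because $\mathbb{T}$ is finite) gives, for every $\tau \in \mathcal{T}_{\theta}$, the bound $Z_{\theta} \ge \mathsf{E}[Z_{\tau}\vert\mathcal{F}_{\theta}] \ge \mathsf{E}[U_{\tau}\vert\mathcal{F}_{\theta}]$, hence $Z_{\theta} \ge \esssup_{\tau\in\mathcal{T}_{\theta}}\mathsf{E}[U_{\tau}\vert\mathcal{F}_{\theta}]$. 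The key observation is that on the event $\{r < \tau^{*}_{\theta}\}$ one has $Z_{r} > U_{r}$, so the maximum defining $Z_{r}$ is attained by its second term there, i.e. $Z_{r} = \mathsf{E}[Z_{r+1}\vert\mathcal{F}_{r}]$ on $\{r < \tau^{*}_{\theta}\}$. This is precisely what is needed to show the stopped process $(Z_{r\wedge\tau^{*}_{\theta}})_{\theta \le r \le T}$ is a martingale, by verifying $\mathsf{E}[Z_{(r+1)\wedge\tau^{*}_{\theta}}\vert\mathcal{F}_{r}] = Z_{r\wedge\tau^{*}_{\theta}}$ separately on $\{r < \tau^{*}_{\theta}\}\in\mathcal{F}_{r}$ and on its complement. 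Specialising to $\theta = t$ yields property 3 directly.

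Finally, from the martingale property of the stopped process one gets $Z_{\theta} = \mathsf{E}[Z_{\tau^{*}_{\theta}}\vert\mathcal{F}_{\theta}]$, and since $Z = U$ at the stopping time $\tau^{*}_{\theta}$ by its very definition, $Z_{\theta} = \mathsf{E}[U_{\tau^{*}_{\theta}}\vert\mathcal{F}_{\theta}]$. Combining with the inequality from the previous paragraph and with $\tau^{*}_{\theta}\in\mathcal{T}_{\theta}$ gives the chain $Z_{\theta} = \mathsf{E}[U_{\tau^{*}_{\theta}}\vert\mathcal{F}_{\theta}] = \esssup_{\tau\in\mathcal{T}_{\theta}}\mathsf{E}[U_{\tau}\vert\mathcal{F}_{\theta}]$, which is property 2; taking $\theta$ to be a deterministic time $t$ then gives the representation $Z_{t} = \esssup_{\tau\in\mathcal{T}_{t}}\mathsf{E}[U_{\tau}\vert\mathcal{F}_{t}]$ in property 1. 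The only points requiring a little care are the measurability of $\tau^{*}_{\theta}$ as a stopping time (immediate in discrete time, being the first time $\ge\theta$ at which the adapted process $Z-U$ vanishes, a set that is non-empty since it contains $T$), the existence of the essential supremum of the family $\{\mathsf{E}[U_{\tau}\vert\mathcal{F}_{\theta}] : \tau\in\mathcal{T}_{\theta}\}$ (standard), and the justification of optional sampling — none of which is a genuine obstacle on the finite horizon $\mathbb{T}$. I expect the bookkeeping around which term attains the maximum in the recursion, and propagating it to the stopped-process martingale identity, to be the most delicate part of the write-up.
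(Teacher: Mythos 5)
Your argument is correct, but note that the paper does not actually prove this proposition: it simply cites it as standard from Neveu, Peskir--Shiryaev and F\"ollmer--Schied. Your backward-induction construction of $Z$, the minimality argument, and the stopped-martingale derivation of $Z_{\theta} = \mathsf{E}[U_{\tau^{*}_{\theta}} \vert \mathcal{F}_{\theta}]$ are exactly the standard proof found in those references, so there is nothing to compare beyond confirming that your write-up is sound.
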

These results are standard and can be found in the references \cite{Neveu1975,Peskir2006,Follmer2011}.

\section{The verification theorem}\label{Section:Optimal-Switching-Discrete-Time-Verification-Theorem}
This section proposes a probabilistic solution to the optimal switching problem. The approach follows that of \cite{Djehiche2009} in continuous time, which postulates the existence of a particular system of $m$ stochastic processes and verifies (Theorem~\ref{theorem:verification}) that the components of this system solve the optimal switching problem with given initial conditions. The existence of these candidate optimal processes is proved in the following section (Theorem~\ref{theorem:ExistenceSnellEnevelope}).
\subsection{An iterative optimal stopping problem}
Suppose there exist $m$ real-valued, adapted processes $Y^{i} = \big(Y^{i}_{t}\big)_{t \in \mathbb{T}}$, $i \in \mathbb{I}$, such that $Y^{i} \in \mathcal{S}^{2}$ and
\begin{equation}\label{eq:VerificationSnellEnvelope}
Y^{i}_{t} = \esssup\limits_{\tau \in \mathcal{T}_{t}} \mathsf{E}\left[\sum_{s=t}^{\tau-1}\Psi_{i}(s) + \Gamma_{i}\mathbf{1}_{\lbrace \tau = T \rbrace} + \max\limits_{j \neq i} \left\lbrace Y^{j}_{\tau} - \gamma_{i,j}\left(\tau\right) \right\rbrace \mathbf{1}_{\lbrace \tau < T \rbrace} \biggm \vert \mathcal{F}_{t}\right].
\end{equation}

For $i \in \mathbb{I}$, define the \emph{implicit gain process} $\big(U^{i}_{t}\big)_{t \in \mathbb{T}}$ by,
\begin{equation}\label{eq:VerificationImplicitGainProcess}
U^{i}_{t} = \max\limits_{j \neq i} \left\lbrace Y^{j}_{t} - \gamma_{i,j}\left(t\right) \right\rbrace \mathbf{1}_{\lbrace t < T \rbrace} + \Gamma_{i}\mathbf{1}_{\lbrace t = T \rbrace}.
\end{equation}

Then equation~\eqref{eq:VerificationSnellEnvelope} becomes,
\begin{equation}\label{eq:VerificationSnellEnvelope2}
Y^{i}_{t} = \esssup\limits_{\tau \in \mathcal{T}_{t}}\mathsf{E}\left[\sum_{s=t}^{\tau-1}\Psi_{i}(s) + U^{i}_{\tau} \biggm \vert \mathcal{F}_{t}\right].
\end{equation}

Note that the assumptions on $Y^{i}$ and the costs guarantee that $U^{i} \in \mathcal{S}^{2}$ for every $i \in \mathbb{I}$. Recalling Proposition~\ref{Proposition:DiscreteSnellEnvelopeProperties}, $\left(Y^{i}_{t} + \sum_{s=0}^{t-1}\Psi_{i}(s)\right)_{t \in \mathbb{T}}$ can be identified as the Snell envelope of the process $\left(U^{i}_{t} + \sum_{s=0}^{t-1}\Psi_{i}(s)\right)_{t \in \mathbb{T}}$ (also see Lemma~\ref{lemma:VerificationLemma}).

\begin{theorem}[Verification Theorem]\label{theorem:verification}
	Let $i \in \mathbb{I}$ be the active mode at some fixed initial time $t \in \mathbb{T}$ and suppose $Y^{1},\ldots,Y^{m}$ as defined in equation~\eqref{eq:VerificationSnellEnvelope} are in $\mathcal{S}^{2}$. Define sequences of random times $\{\tau^{*}_{n}\}_{n \ge 0}$ and mode indicators $\{\iota^{*}_{n}\}_{n \ge 0}$ as follows:
	\begin{gather}\label{eq:OptimalStoppingStrategy}
	\begin{split}
	\tau^{*}_{0} & = t,\hspace{1pc} \iota^{*}_{0} = i \hspace{1 em} \text{and for } n \ge 1: \\
	\tau^{*}_{n} & = \inf\left\lbrace \tau^{*}_{n-1} \le s \le T \colon Y^{\iota^{*}_{n-1}}_{s} = U^{\iota^{*}_{n-1}}_{s}  \right\rbrace,\hspace{1em} \iota^{*}_{n} = \sum\limits_{j \in \mathbb{I}} j \mathbf{1}_{A^{\iota^{*}_{n-1}}_{j}}
	\end{split} \\
	\text{where } A^{\iota^{*}_{n-1}}_{j} (= A^{\iota^{*}_{n-1}}_{j}(\omega)) \text{ is the event:} \hspace{\textwidth} \nonumber \\
	A^{\iota^{*}_{n-1}}_{j} \coloneqq \left\lbrace Y^{j}_{\tau^{*}_{n}} - \gamma_{\iota^{*}_{n-1},j}\left(\tau^{*}_{n}\right) = \max\limits_{k \neq \iota^{*}_{n-1}}\left\lbrace Y^{k}_{\tau^{*}_{n}} - \gamma_{\iota^{*}_{n-1},k}\left(\tau^{*}_{n}\right) \right\rbrace \right\rbrace. \nonumber
	\end{gather}
	Then, $\alpha^{*} = \left(\tau^{*}_{n},\iota^{*}_{n}\right)_{n \ge 0} \in \mathcal{A}_{t,i}$ and satisfies
	\begin{equation}\label{eq:OptimalSwitchingProcess}
	Y^{i}_{t} = J(\alpha^{*};t,i) = \esssup\limits_{\alpha \in \mathcal{A}_{t,i}} J(\alpha;t,i)\hspace{1em} \text{a.s.}
	\end{equation}
\end{theorem}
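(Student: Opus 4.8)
The plan is to run the classical martingale verification argument for optimal stopping mode-by-mode along the prescribed strategy. Recall from the discussion preceding the theorem that, for each $i$, the process $\bigl(Y^{i}_{t}+\sum_{s=0}^{t-1}\Psi_{i}(s)\bigr)_{t\in\mathbb{T}}$ is the Snell envelope of $\bigl(U^{i}_{t}+\sum_{s=0}^{t-1}\Psi_{i}(s)\bigr)_{t\in\mathbb{T}}$, with $U^{i}$ as in \eqref{eq:VerificationImplicitGainProcess}, so in particular $Y^{i}_{s}\ge U^{i}_{s}$ for all $s$, and $Y^{i}_{T}=U^{i}_{T}=\Gamma_{i}$. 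The proof splits into three parts: (i) $\alpha^{*}\in\mathcal{A}_{t,i}$; (ii) $Y^{i}_{t}=J(\alpha^{*};t,i)$; (iii) $Y^{i}_{t}\ge J(\alpha;t,i)$ for every $\alpha\in\mathcal{A}_{t,i}$. Since (iii) gives $\esssup_{\alpha}J(\alpha;t,i)\le Y^{i}_{t}$, these three together yield \eqref{eq:OptimalSwitchingProcess}.

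For (i): each $\tau^{*}_{n}$ is the debut after $\tau^{*}_{n-1}$ of the adapted random set $\{s:Y^{\iota^{*}_{n-1}}_{s}=U^{\iota^{*}_{n-1}}_{s}\}$, hence lies in $\mathcal{T}_{\tau^{*}_{n-1}}$, and $\iota^{*}_{n}$ is $\mathcal{F}_{\tau^{*}_{n}}$-measurable (taking, say, the least maximising index in $\{k\ne\iota^{*}_{n-1}\}$ to break ties); moreover $\tau^{*}_{n}\le T$ since $Y^{j}_{T}=\Gamma_{j}=U^{j}_{T}$ for every $j$. The one point genuinely requiring the cost structure is $\mathsf{P}\bigl(\{\tau^{*}_{n}<T\}\cap\{\tau^{*}_{n}=\tau^{*}_{n+1}\}\bigr)=0$ for $n\ge1$. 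I would argue by contradiction on such an event: writing $p=\iota^{*}_{n-1}$, $q=\iota^{*}_{n}$, $r=\iota^{*}_{n+1}$ and $\sigma=\tau^{*}_{n}$ (so $\sigma<T$), the definitions of $\tau^{*}_{n}$, $\tau^{*}_{n+1}$ and of the maximising indices give $Y^{p}_{\sigma}=U^{p}_{\sigma}=Y^{q}_{\sigma}-\gamma_{p,q}(\sigma)$ and $Y^{q}_{\sigma}=U^{q}_{\sigma}=Y^{r}_{\sigma}-\gamma_{q,r}(\sigma)$. If $r\ne p$, then $U^{p}_{\sigma}\ge Y^{r}_{\sigma}-\gamma_{p,r}(\sigma)$ by definition of $U^{p}_{\sigma}$, and combining the three relations yields $\gamma_{p,r}(\sigma)\ge\gamma_{p,q}(\sigma)+\gamma_{q,r}(\sigma)$, contradicting \eqref{assumption:Discrete-Time-SwitchingCosts-No-Arbitrage}; if $r=p$, the two identities force $\gamma_{p,q}(\sigma)+\gamma_{q,p}(\sigma)=0$, contradicting \eqref{assumption:Discrete-Time-SwitchingCosts-No-Cost}--\eqref{assumption:Discrete-Time-SwitchingCosts-No-Arbitrage}. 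Hence the switch times strictly below $T$ are a.s.\ strictly increasing, so $N(\alpha^{*})\le T$ and $\tau^{*}_{n}=T$ for all $n\ge N(\alpha^{*})+1$; together with $\iota^{*}_{n}\ne\iota^{*}_{n-1}$ this gives $\alpha^{*}\in\mathcal{A}_{t,i}$.

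For (ii) and (iii) I would use a single bookkeeping scheme. Applying Proposition~\ref{Proposition:DiscreteSnellEnvelopeProperties}(2) --- respectively the supermartingale property of the same Snell envelope together with optional sampling --- after the stopping time $\tau_{n}$, and summing over the $\mathcal{F}_{\tau_{n}}$-measurable cells $\{\iota_{n}=j\}$, $j\in\mathbb{I}$, yields for $\alpha^{*}$ the one-step identity
\[
Y^{\iota^{*}_{n}}_{\tau^{*}_{n}}=\mathsf{E}\biggl[\sum_{s=\tau^{*}_{n}}^{\tau^{*}_{n+1}-1}\Psi_{\iota^{*}_{n}}(s)+U^{\iota^{*}_{n}}_{\tau^{*}_{n+1}}\biggm \vert \mathcal{F}_{\tau^{*}_{n}}\biggr],\qquad n\ge0,
\]
and, for a general $\alpha$, the same relation with the left side dominating the right; recall in addition that $Y^{j}_{s}\ge U^{j}_{s}\ge Y^{k}_{s}-\gamma_{j,k}(s)$ for $s<T$, $k\ne j$, with equality at $s=\tau^{*}_{n+1}$, $k=\iota^{*}_{n+1}$ in the starred case, and that $U^{j}_{T}=\Gamma_{j}$. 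Introduce the accumulated net reward $C_{n}=\sum_{s=t}^{\tau_{n}-1}\Psi_{\mathbf{u}_{s}}(s)-\sum_{k=1}^{n}\gamma_{\iota_{k-1},\iota_{k}}(\tau_{k})\mathbf{1}_{\{\tau_{k}<T\}}$ and the process
\[
X_{n}=C_{n}+Y^{\iota_{n}}_{\tau_{n}}\mathbf{1}_{\{\tau_{n}<T\}}+\Gamma_{\iota_{N(\alpha)}}\mathbf{1}_{\{\tau_{n}=T\}},\qquad n\ge0 ,
\]
which is $\mathcal{F}_{\tau_{n}}$-measurable, integrable, and satisfies $X_{0}=Y^{i}_{t}$. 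Using the one-step (in)equality, the telescoping relation $C_{n+1}=C_{n}+\sum_{s=\tau_{n}}^{\tau_{n+1}-1}\Psi_{\iota_{n}}(s)-\gamma_{\iota_{n},\iota_{n+1}}(\tau_{n+1})\mathbf{1}_{\{\tau_{n+1}<T\}}$, and the observation that on $\{\tau_{n}<T\}\cap\{\tau_{n+1}=T\}$ one has $N(\alpha)=n$ and $\iota_{N(\alpha)}=\iota_{n}$ (by \eqref{eq:OS-Discrete-Time-Last-Mode-Switched-To}, so the terminal term is correctly matched), a direct case check --- the case $\{\tau_{n}=T\}$ being trivial since $C_{n+1}=C_{n}$ there --- gives $\mathsf{E}[X_{n+1}\mid\mathcal{F}_{\tau_{n}}]=X_{n}$ for $\alpha=\alpha^{*}$ and $\mathsf{E}[X_{n+1}\mid\mathcal{F}_{\tau_{n}}]\le X_{n}$ for general $\alpha$. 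Since $(\mathcal{F}_{\tau_{n}})_{n\ge0}$ is a filtration with $\mathcal{F}_{\tau_{0}}=\mathcal{F}_{t}$, iterating the tower property gives $\mathsf{E}[X_{T+1}\mid\mathcal{F}_{t}]=Y^{i}_{t}$ for $\alpha^{*}$ and $\mathsf{E}[X_{T+1}\mid\mathcal{F}_{t}]\le Y^{i}_{t}$ for general $\alpha$; and since $N(\alpha)\le T$ forces $\tau_{T+1}=T$ a.s., $X_{T+1}$ is precisely the random variable inside the conditional expectation defining $J(\alpha;t,i)$ in \eqref{Definition:Discrete-TimeSwitchingControlObjective}. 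This establishes (ii) and (iii), hence the theorem.

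I expect the genuine obstacle to be part (i) --- excluding a second switch at a time $s<T$ --- since that is the only place where Assumption~\ref{assumption:SwitchingCosts} is actually used and where the argument leaves the pure optimal-stopping template. The remainder is routine telescoping; the single point demanding care is the horizon: the recursive rule \eqref{eq:OptimalStoppingStrategy} keeps prescribing nominal mode changes at time $T$ that incur no cost and must not be allowed to corrupt the terminal term $\Gamma_{\iota_{N(\alpha)}}$, which is exactly why $X_{n}$ carries the indicator split and why the recursion is read off at the deterministic index $T+1$ rather than at the random last-switch index.
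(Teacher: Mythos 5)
Your proposal is correct and follows essentially the same route as the paper: admissibility of $\alpha^{*}$ is obtained by the same contradiction argument from the no-arbitrage condition~\eqref{assumption:Discrete-Time-SwitchingCosts-No-Arbitrage} (the paper's Lemma~\ref{lemma:AdmissibleOptimalControl}), and the one-step identity obtained by summing the Snell-envelope optimality over the cells $\{\iota_{n}=j\}$ is exactly the content of the paper's Lemma~\ref{lemma:VerificationLemma}, after which both arguments telescope with equalities for $\alpha^{*}$ and inequalities for general $\alpha$. Your only departure is cosmetic but welcome: packaging the iteration as the (super)martingale $X_{n}$ along the filtration $(\mathcal{F}_{\tau_{n}})_{n\ge 0}$ and terminating at the deterministic index $T+1$ makes explicit the induction that the paper compresses into ``repeating the procedure of substituting for $Y^{\iota^{*}_{n}}_{\tau^{*}_{n}}$.''
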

\begin{proof}
	The proof is essentially the same as Theorem~1 of \cite{Djehiche2009}. Recall the definition of $U^{i}$ in equation~\eqref{eq:VerificationImplicitGainProcess}. At time $t$, $Y^{i}_{t}$ is given by
	
	\[
	Y^{i}_{t} = \esssup\limits_{\tau \in \mathcal{T}_{t}}\mathsf{E}\left[\sum_{s=t}^{\tau-1}\Psi_{i}(s) + U^{i}_{\tau} \biggm \vert \mathcal{F}_{t}\right].
	\]
	If $t = T$, then $\tau^{*}_{0} = T$, $\iota^{*}_{0} = i$ which leads to $Y^{i}_{T} = \Gamma_{i}$, and the claim follows trivially since $\Gamma_{i} = J(\alpha;T,i) = V(T,i)$ almost surely for any switching control $\alpha \in \mathcal{A}_{T,i}$.
	
	Suppose now that $t < T$. Lemma~\ref{lemma:AdmissibleOptimalControl} in the appendix verifies that $\alpha^{*} \in \mathcal{A}_{t,i}$. Note that the infimum in equation~\eqref{eq:OptimalStoppingStrategy} is always attained since $Y^{i}_{T} = U^{i}_{T}$ a.s. for every $i \in \mathbb{I}$. The stopping time $\tau^{*}_{1}$ in \eqref{eq:OptimalStoppingStrategy} is optimal after $t$ by Proposition~\ref{Proposition:DiscreteSnellEnvelopeProperties}. Using this together with the definition of $\iota^{*}_{1}$ gives, almost surely,
	\begin{align}\label{eq:Verification1}
	Y^{i}_{t} & = \esssup\limits_{\tau \in \mathcal{T}_{t}}\mathsf{E}\left[\sum_{s = t}^{\tau-1}\Psi_{i}(s) + U^{i}_{\tau} \biggm \vert \mathcal{F}_{t}\right] \nonumber \\
	& = \mathsf{E}\left[\sum_{s = t}^{\tau^{*}_{1} - 1}\Psi_{i}(s) + U^{i}_{\tau^{*}_{1}} \biggm \vert \mathcal{F}_{t}\right] \nonumber \\
	& = \mathsf{E}\left[\sum_{s = t}^{\tau^{*}_{1}-1}\Psi_{i}(s) + \Gamma_{i}\mathbf{1}_{\lbrace \tau^{*}_{1} = T \rbrace} + \max\limits_{j \neq i} \left\lbrace Y^{j}_{\tau^{*}_{1}} - \gamma_{i,j}\left(\tau^{*}_{1}\right)\right\rbrace \mathbf{1}_{\lbrace \tau^{*}_{1} < T \rbrace} \biggm \vert \mathcal{F}_{t}\right] \nonumber \\
	& = \mathsf{E}\left[\sum_{s = t}^{\tau^{*}_{1}-1}\Psi_{i}(s) + \Gamma_{i}\mathbf{1}_{\lbrace \tau^{*}_{1} = T \rbrace} + \left\lbrace Y^{\iota^{*}_{1}}_{\tau^{*}_{1}} - \gamma_{i,\iota^{*}_{1}}\left(\tau^{*}_{1}\right) \right\rbrace \mathbf{1}_{\lbrace \tau^{*}_{1} < T \rbrace} \biggm \vert \mathcal{F}_{t}\right]
	\end{align}
	
	Lemma~\ref{lemma:VerificationLemma} in the appendix confirms that $Y^{\iota^{*}_{1}}$ satisfies:
	\begin{equation}\label{eq:SequentialOptimality}
	Y^{\iota^{*}_{1}}_{s} = \esssup\limits_{\tau \in \mathcal{T}_{s}}\mathsf{E}\left[\sum_{r=s}^{\tau^{*}_{1}-1}\Psi_{\iota^{*}_{1}}(r) + U^{\iota^{*}_{1}}_{\tau} \biggm \vert \mathcal{F}_{s}\right] \enskip \text{on} \enskip [\tau^{*}_{1},T].
	\end{equation}
	
	Use \eqref{eq:SequentialOptimality} together with the definition and optimality of $\tau^{*}_{2}$ and $\iota^{*}_{2}$ to get, almost surely,
	\begin{align}
	\mathbf{1}_{\lbrace \tau^{*}_{1} < T \rbrace}Y^{\iota^{*}_{1}}_{\tau^{*}_{1}} = {} & \esssup\limits_{\tau \in \mathcal{T}_{\tau^{*}_{1}}}\mathsf{E}\left[\sum_{r=\tau^{*}_{1}}^{\tau - 1}\Psi_{\iota^{*}_{1}}(r) + U^{\iota^{*}_{1}}_{\tau} \biggm \vert \mathcal{F}_{\tau^{*}_{1}}\right]\mathbf{1}_{\lbrace \tau^{*}_{1} < T \rbrace} \nonumber \\
	= {} & \mathsf{E}\bigg[\sum_{r=\tau^{*}_{1}}^{\tau^{*}_{2}-1}\Psi_{\iota^{*}_{1}}(r) + \Gamma_{\iota^{*}_{1}}\mathbf{1}_{\lbrace \tau^{*}_{2} = T \rbrace} + \left\lbrace Y^{\iota^{*}_{2}}_{\tau^{*}_{2}} - \gamma_{\iota^{*}_{1},\iota^{*}_{2}}\left(\tau^{*}_{2}\right) \right\rbrace \mathbf{1}_{\lbrace \tau^{*}_{2} < T \rbrace} \biggm \vert \mathcal{F}_{\tau^{*}_{1}}\bigg]\mathbf{1}_{\lbrace \tau^{*}_{1} < T \rbrace}\label{eq:Verification2}
	\end{align}
	
	Combining~\eqref{eq:Verification1} and~\eqref{eq:Verification2} gives the following expression for $Y^{i}_{t}$: almost surely,
	\begin{align}\label{eq:Verification3}
	Y^{i}_{t} = {} & \mathsf{E}\left[\sum_{s = t}^{\tau^{*}_{1}-1}\Psi_{i}(s) + \Gamma_{i}\mathbf{1}_{\lbrace \tau^{*}_{1} = T \rbrace} + \left\lbrace Y^{\iota^{*}_{1}}_{\tau^{*}_{1}} - \gamma_{i,\iota^{*}_{1}}\left(\tau^{*}_{1}\right) \right\rbrace \mathbf{1}_{\lbrace \tau^{*}_{1} < T \rbrace} \biggm \vert \mathcal{F}_{t}\right] \nonumber \\
	= {} & \mathsf{E}\left[\sum_{s = t}^{\tau^{*}_{1}-1}\Psi_{i}(s) + \mathsf{E}\left[\sum_{r=\tau^{*}_{1}}^{\tau^{*}_{2}-1}\Psi_{\iota^{*}_{1}}(r) \biggm \vert \mathcal{F}_{\tau^{*}_{1}}\right] \mathbf{1}_{\lbrace \tau^{*}_{1} < T \rbrace} \biggm \vert \mathcal{F}_{t}\right] \nonumber \\
	& + \mathsf{E}\left[\Gamma_{i}\mathbf{1}_{\lbrace \tau^{*}_{1} = T \rbrace} + \mathsf{E}\left[\Gamma_{\iota^{*}_{1}}\mathbf{1}_{\lbrace \tau^{*}_{2} = T \rbrace}\big \vert \mathcal{F}_{\tau^{*}_{1}}\right] \mathbf{1}_{\lbrace \tau^{*}_{1} < T \rbrace} \big \vert \mathcal{F}_{t}\right] \nonumber \\
	& - \mathsf{E}\left[\gamma_{i,\iota^{*}_{1}}\left(\tau^{*}_{1}\right) \mathbf{1}_{\lbrace \tau^{*}_{1} < T \rbrace} + \mathsf{E}\left[ \gamma_{\iota^{*}_{1},\iota^{*}_{2}}\left(\tau^{*}_{2}\right) \mathbf{1}_{\lbrace \tau^{*}_{2} < T \rbrace} \big \vert \mathcal{F}_{\tau^{*}_{1}}\right] \mathbf{1}_{\lbrace \tau^{*}_{1} < T \rbrace} \big \vert \mathcal{F}_{t}\right] \nonumber \\
	& + \mathsf{E}\left[\mathsf{E}\left[Y^{\iota^{*}_{2}}_{\tau^{*}_{2}}\mathbf{1}_{\lbrace \tau^{*}_{2} < T \rbrace} \big \vert \mathcal{F}_{\tau^{*}_{1}}\right] \mathbf{1}_{\lbrace \tau^{*}_{1} < T \rbrace} \big \vert \mathcal{F}_{t}\right]
	\end{align}
	
	Since $\mathbf{1}_{\lbrace \tau^{*}_{1} = T \rbrace}$, $\mathbf{1}_{\lbrace \tau^{*}_{1} < T \rbrace}$ and $\gamma_{i,\iota^{*}_{1}}\left(\tau^{*}_{1}\right)$ are all $\mathcal{F}_{\tau^{*}_{1}}$-measurable, they can be brought inside the conditional expectation with respect to $\mathcal{F}_{\tau^{*}_{1}}$ in equation~\eqref{eq:Verification3}: almost surely,
	\begin{align*}
	Y^{i}_{t} = {} & \mathsf{E}\left[\mathsf{E}\left[\sum_{s = t}^{\tau^{*}_{1}-1}\Psi_{i}(s) + \sum_{r=\tau^{*}_{1}}^{\tau^{*}_{2}-1}\Psi_{\iota^{*}_{1}}(r)\mathbf{1}_{\lbrace \tau^{*}_{1} < T \rbrace} \biggm \vert \mathcal{F}_{\tau^{*}_{1}}\right] \biggm \vert \mathcal{F}_{t}\right] \\
	& + \mathsf{E}\left[\mathsf{E}\left[\Gamma_{i}\mathbf{1}_{\lbrace \tau^{*}_{1} = T \rbrace} + \Gamma_{\iota^{*}_{1}}\mathbf{1}_{\lbrace \tau^{*}_{2} = T \rbrace}\mathbf{1}_{\lbrace \tau^{*}_{1} < T \rbrace}\big \vert \mathcal{F}_{\tau^{*}_{1}}\right] \big \vert \mathcal{F}_{t}\right] \\
	& - \mathsf{E}\left[\mathsf{E}\left[\gamma_{i,\iota^{*}_{1}}\left(\tau^{*}_{1}\right) \mathbf{1}_{\lbrace \tau^{*}_{1} < T \rbrace} + \gamma_{\iota^{*}_{1},\iota^{*}_{2}}\left(\tau^{*}_{2}\right) \mathbf{1}_{\lbrace \tau^{*}_{2} < T \rbrace}\mathbf{1}_{\lbrace \tau^{*}_{1} < T \rbrace} \big \vert \mathcal{F}_{\tau^{*}_{1}}\right] \big \vert \mathcal{F}_{t}\right] \\
	& + \mathsf{E}\left[\mathsf{E}\left[Y^{\iota^{*}_{2}}_{\tau^{*}_{2}}\mathbf{1}_{\lbrace \tau^{*}_{2} < T \rbrace}\mathbf{1}_{\lbrace \tau^{*}_{1} < T \rbrace} \big \vert \mathcal{F}_{\tau^{*}_{1}}\right] \big \vert \mathcal{F}_{t}\right]
	\end{align*}
	
	Using $\iota^{*}_{0} = i$ and the definition of the mode indicator $\mathbf{u}^{*}$ (cf. \eqref{eq:Discrete-Time-OS-ModeIndicator}) gives,
	\[
	\sum_{s = t}^{\tau^{*}_{2}-1}\Psi_{\mathbf{u}^{*}_{s}}(s) = \sum_{r = t}^{\tau^{*}_{1}-1}\Psi_{i}(r) + \sum_{r=\tau^{*}_{1}}^{\tau^{*}_{2}-1}\Psi_{\iota^{*}_{1}}(r)\mathbf{1}_{\lbrace \tau^{*}_{1} < T \rbrace}.
	\]
	
	Lemma~\ref{lemma:AdmissibleOptimalControl} in the appendix verifies that $\mathsf{P}\left(\{\tau^{*}_{n} < T \} \cap \{\tau^{*}_{n} = \tau^{*}_{n+1}\}\right) = 0$ and ensures the above expression is well defined. Since $\tau^{*}_{1} \le \tau^{*}_{2}$ it follows that $\left\lbrace \tau^{*}_{2} < T  \right\rbrace \subset \left\lbrace \tau^{*}_{1} < T  \right\rbrace$ and therefore $\mathbf{1}_{\lbrace \tau^{*}_{2} < T \rbrace}\mathbf{1}_{\lbrace \tau^{*}_{1} < T \rbrace} = \mathbf{1}_{\lbrace \tau^{*}_{2} < T \rbrace}$ almost surely. Note that $\tau^{*}_{0} = t < T$ so $\mathbf{1}_{\lbrace \tau^{*}_{0} < T \rbrace} = 1$ and that $\mathcal{F}_{t} \subseteq \mathcal{F}_{\tau^{*}_{1}}$ since $t \le \tau^{*}_{1}$. These observations together with the tower property of conditional expectations, shows that $Y^{i}_{t}$ satisfies: almost surely,
	\begin{align}\label{eq:Verification4}
	Y^{i}_{t} = {} & \mathsf{E}\left[\mathsf{E}\left[\sum_{s = t}^{\tau^{*}_{2}-1}\Psi_{\mathbf{u}^{*}_{s}}(s) + \sum_{n = 1}^{2}\Gamma_{\iota^{*}_{n-1}}\mathbf{1}_{\lbrace \tau^{*}_{n} = T \rbrace}\mathbf{1}_{\lbrace \tau^{*}_{n-1} < T \rbrace} \biggm \vert \mathcal{F}_{\tau^{*}_{1}}\right] \biggm \vert \mathcal{F}_{t}\right] \nonumber \\
	& + \mathsf{E}\left[\mathsf{E}\left[\sum_{n = 1}^{2}-\gamma_{\iota^{*}_{n-1},\iota^{*}_{n}}\left(\tau^{*}_{n}\right)\mathbf{1}_{\lbrace \tau^{*}_{n} < T \rbrace} + Y^{\iota^{*}_{2}}_{\tau^{*}_{2}} \mathbf{1}_{\lbrace \tau^{*}_{2} < T \rbrace} \biggm \vert \mathcal{F}_{\tau^{*}_{1}}\right] \biggm \vert \mathcal{F}_{t}\right] \nonumber \\
	\begin{split}
	= {} & \mathsf{E}\left[\sum_{s = t}^{\tau^{*}_{2}-1}\Psi_{\mathbf{u}^{*}_{s}}(s) + \sum_{k = 0}^{1}\Gamma_{\iota^{*}_{k}}\mathbf{1}_{\lbrace \tau^{*}_{k} < T \rbrace}\mathbf{1}_{\lbrace \tau^{*}_{k+1} = T \rbrace} - \sum_{n = 1}^{2}\gamma_{\iota^{*}_{n-1},\iota^{*}_{n}}\left(\tau^{*}_{n}\right)\mathbf{1}_{\lbrace \tau^{*}_{n} < T \rbrace} \biggm \vert \mathcal{F}_{t}\right] \\
	& + \mathsf{E}\left[Y^{\iota^{*}_{2}}_{\tau^{*}_{2}} \mathbf{1}_{\lbrace \tau^{*}_{2} < T \rbrace} \biggm \vert \mathcal{F}_{t}\right].
	\end{split}
	\end{align}
	
	Let $N(\alpha^{*})$ be the total number of switches under $\alpha^{*}$ (cf. \eqref{eq:OS-Discrete-Time-RandomNumberOfSwitches}). Since $\alpha^{*}$ is admissible by Lemma~\ref{lemma:AdmissibleOptimalControl} in the appendix, for $n \ge 1$ the switching times satisfy $\tau^{*}_{n} < \tau^{*}_{n+1}$ on $\{n \le N(\alpha^{*})\}$ and $\tau^{*}_{n} = T$ on $\{n > N(\alpha^{*})\}$. Repeating the procedure of substituting for $Y^{\iota^{*}_{n}}_{\tau^{*}_{n}}$ with $ n = 2, 3,\ldots$ yields
	\begin{equation}\label{eq:Verification5}
	Y^{i}_{t} =
	\mathsf{E}\left[\sum_{s = t}^{T-1}\Psi_{\mathbf{u}^{*}_{s}}(s) + \sum_{k = 0}^{N(\alpha^{*})}\Gamma_{\iota^{*}_{k}}\mathbf{1}_{\lbrace \tau^{*}_{k} < T \rbrace}\mathbf{1}_{\lbrace \tau^{*}_{k+1} = T \rbrace} - \sum_{n = 1}^{N(\alpha^{*})}\gamma_{\iota^{*}_{n-1},\iota^{*}_{n}}\left(\tau^{*}_{n}\right)\biggm \vert \mathcal{F}_{t}\right].
	\end{equation}
	
	The sum of the terminal rewards collapses to a single term,
	\begin{equation}\label{eq:EquivalentTerminalCost}
	\sum_{k = 0}^{N(\alpha^{*})}\Gamma_{\iota^{*}_{k}}\mathbf{1}_{\lbrace \tau^{*}_{k} < T \rbrace}\mathbf{1}_{\lbrace \tau^{*}_{k+1} = T \rbrace} = \Gamma_{\iota^{*}_{N(\alpha^{*})}} \hspace{1em}\mathsf{P}-\text{a.s.}
	\end{equation}
	From equations~\eqref{eq:Verification5} and \eqref{eq:EquivalentTerminalCost}, one arrives at the following representation for $Y^{i}_{t}$: $\mathsf{P}$-a.s.,
	\begin{equation}\label{eq:Verification6}
	Y^{i}_{t} = \mathsf{E}\left[\sum_{s = t}^{T-1}\Psi_{\mathbf{u}^{*}_{s}}(s) + \Gamma_{\iota^{*}_{N(\alpha^{*})}} - \sum_{n \ge 1}\gamma_{\iota^{*}_{n-1},\iota^{*}_{n}}\left(\tau^{*}_{n}\right)\mathbf{1}_{\lbrace \tau^{*}_{n} < T  \rbrace} \biggm \vert \mathcal{F}_{t}\right] = J(\alpha^{*};t,i)
	\end{equation}
	
	Now let $\alpha = \left(\tau_{n},\iota_{n}\right)_{n \ge 0} \in \mathcal{A}_{t,i}$ be any admissible control. The verification theorem can be completed by showing that $J(\alpha^{*};t,i) \ge J(\alpha;t,i)$ a.s. First, note that $J(\alpha^{*};T,i) = J(\alpha;T,i)$ when $t = T$, so assume henceforth that $t < T$. Then, due to possible sub-optimality of the pair $(\tau_{1},\iota_{1})$, it is true that
	\begin{align*}
	Y^{i}_{t} & = \esssup_{\tau \in \mathcal{T}_{t}}\mathsf{E}\left[\sum_{s = t}^{\tau-1}\Psi_{i}(s) + U^{i}_{\tau} \biggm\vert \mathcal{F}_{t}\right] \\
	& \ge \mathsf{E}\left[\sum_{s = t}^{\tau_{1}-1}\Psi_{i}(s) + U^{i}_{\tau_{1}} \biggm\vert \mathcal{F}_{t}\right] \nonumber \\
	& = \mathsf{E}\left[\sum_{s = t}^{\tau_{1}-1}\Psi_{i}(s) + \Gamma_{i}\mathbf{1}_{\lbrace \tau_{1} = T \rbrace} + \max\limits_{j \neq i} \left\lbrace Y^{j}_{\tau_{1}} - \gamma_{i,j}\left(\tau_{1}\right) \right\rbrace \mathbf{1}_{\lbrace \tau_{1} < T \rbrace} \biggm\vert \mathcal{F}_{t}\right] \nonumber \\
	& \ge \mathsf{E}\left[\sum_{s = t}^{\tau_{1}-1}\Psi_{i}(s) + \Gamma_{i}\mathbf{1}_{\lbrace \tau_{1} = T \rbrace} + \left\lbrace Y^{\iota_{1}}_{\tau_{1}} - \gamma_{i,\iota_{1}}\left(\tau_{1}\right) \right\rbrace \mathbf{1}_{\lbrace \tau_{1} < T \rbrace} \biggm\vert \mathcal{F}_{t}\right].
	\end{align*}
	Repeating the arguments leading to \eqref{eq:Verification6}, replacing the equalities $(=)$ with inequalities ($\ge$) due to possible sub-optimality of $(\tau_{n},\iota_{n})$ for $n \ge 2$, eventually leads to
	\begin{align*}
	Y^{i}_{t} \ge {} & \mathsf{E}\left[\sum_{s = t}^{T-1}\Psi_{\mathbf{u}_{s}}(s) + \sum_{k = 0}^{N(\alpha) }\Gamma_{\iota_{k}}\mathbf{1}_{\lbrace \tau_{k} < T \rbrace}\mathbf{1}_{\lbrace \tau_{k+1} = T \rbrace} - \sum_{n = 1}^{N(\alpha) + 1}\gamma_{\iota_{n-1},\iota_{n}}\left(\tau_{n}\right)\mathbf{1}_{\lbrace \tau_{n} < T \rbrace}\biggm \vert \mathcal{F}_{t}\right] \\ 
	& + \mathsf{E}\left[Y^{\iota_{N(\alpha) + 1}}_{\tau_{N(\alpha) + 1}} \mathbf{1}_{\lbrace \tau_{N(\alpha) + 1} < T \rbrace} \biggm \vert \mathcal{F}_{t}\right] \\
	= {} & \mathsf{E}\left[\sum_{s = t}^{T-1}\Psi_{\mathbf{u}_{s}}(s) + \Gamma_{\iota_{N(\alpha)}} - \sum_{n \ge 1}\gamma_{\iota_{n-1},\iota_{n}}\left(\tau_{n}\right)\mathbf{1}_{\lbrace \tau_{n} < T\rbrace} \biggm \vert \mathcal{F}_{t}\right] \\
	= {} & J(\alpha;t,i)
	\end{align*}
	and proves that the strategy $\alpha^{*}$ is optimal.
\end{proof}
\section{Existence of the optimal processes}\label{Section:Discrete-Time-OS-Existence}
This section addresses the existence of the processes $Y^{1},\ldots,Y^{m}$ used in Theorem~\ref{theorem:verification}. The proof is a constructive one and verifies that the explicit dynamic programming scheme cited in \cite{Carmona2008,Gassiat2012} and elsewhere indeed solves the optimal switching problem.
\subsection{Backward dynamic programming}
\begin{lemma}[Backward Induction]\label{Lemma:BackwardInductionExistence}
	For each $i \in \mathbb{I}$, define the process $\tilde{Y}^{i} = \big(\tilde{Y}^{i}_{t}\big)_{t \in \mathbb{T}}$ recursively as follows:
	\begin{equation}\label{Definition:BackwardInduction}
	\begin{split}
	\tilde{Y}^{i}_{T} & = \Gamma_{i},\hspace{1 em}\text{ and for } t = T - 1,\ldots,0 : \\
	\tilde{Y}^{i}_{t} & = \max\limits_{j \neq i} \left\lbrace -\gamma_{i,j}\left(t\right) + \Psi_{j}(t) + \mathsf{E}\left[\tilde{Y}^{j}_{t+1} \vert \mathcal{F}_{t}\right] \right\rbrace \vee \left\lbrace \Psi_{i}(t) + \mathsf{E}\left[\tilde{Y}^{i}_{t+1} \vert \mathcal{F}_{t}\right]\right\rbrace.
	\end{split}
	\end{equation}
	Then $\tilde{Y}^{i}$ is $\mathbb{F}$-adapted and in $\mathcal{S}^{2}$.
\end{lemma}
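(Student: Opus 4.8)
The plan is to argue by backward induction on $t \in \mathbb{T}$, carrying out the assertion for all $m$ processes \emph{simultaneously}; that is, the induction hypothesis at stage $t+1$ is that $\tilde{Y}^{j}_{t+1}$ is $\mathcal{F}_{t+1}$-measurable and lies in $L^{2}$ for every $j \in \mathbb{I}$. The base case $t = T$ is immediate: $\tilde{Y}^{i}_{T} = \Gamma_{i}$ is $\mathcal{F}_{T}$-measurable and in $L^{2}$ by Assumption~\ref{Assumption:Optimal-Switching-Discrete-Time-Integrability}.

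For the inductive step, fix $t \le T-1$ and $i \in \mathbb{I}$. By the induction hypothesis each $\tilde{Y}^{j}_{t+1}$ ($j \in \mathbb{I}$) is $\mathcal{F}_{t+1}$-measurable, so $\mathsf{E}[\tilde{Y}^{j}_{t+1} \mid \mathcal{F}_{t}]$ is a well-defined $\mathcal{F}_{t}$-measurable random variable; since $\Psi_{j}$ and $\gamma_{i,j}$ are adapted, $\Psi_{j}(t)$ and $\gamma_{i,j}(t)$ are $\mathcal{F}_{t}$-measurable. As the defining formula~\eqref{Definition:BackwardInduction} for $\tilde{Y}^{i}_{t}$ is built from these random variables using only finite maxima (over $j \neq i$) and the binary operation $\vee$, it follows that $\tilde{Y}^{i}_{t}$ is $\mathcal{F}_{t}$-measurable, which gives adaptedness.

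For integrability I would estimate, using $a \vee b \le |a| + |b|$ and $\max_{j \neq i} c_{j} \le \sum_{j \neq i} |c_{j}|$,
\[
|\tilde{Y}^{i}_{t}| \le |\Psi_{i}(t)| + \bigl|\mathsf{E}[\tilde{Y}^{i}_{t+1} \mid \mathcal{F}_{t}]\bigr| + \sum_{j \neq i}\Bigl( |\gamma_{i,j}(t)| + |\Psi_{j}(t)| + \bigl|\mathsf{E}[\tilde{Y}^{j}_{t+1} \mid \mathcal{F}_{t}]\bigr| \Bigr).
\]
Each summand on the right lies in $L^{2}$: the terms $\gamma_{i,j}(t)$ and $\Psi_{j}(t)$ are in $L^{2}$ because $\gamma_{i,j}, \Psi_{j} \in \mathcal{S}^{2}$, and $\mathsf{E}[\tilde{Y}^{j}_{t+1} \mid \mathcal{F}_{t}] \in L^{2}$ by the conditional Jensen inequality (conditional expectation is an $L^{2}$-contraction) together with the induction hypothesis $\tilde{Y}^{j}_{t+1} \in L^{2}$. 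Since a finite sum of $L^{2}$ random variables is again in $L^{2}$, we obtain $\tilde{Y}^{i}_{t} \in L^{2}$, which closes the induction.

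Finally, $\mathcal{S}^{2}$ membership follows from finiteness of $\mathbb{T}$: $\mathsf{E}[\max_{t \in \mathbb{T}} |\tilde{Y}^{i}_{t}|^{2}] \le \sum_{t \in \mathbb{T}} \mathsf{E}[|\tilde{Y}^{i}_{t}|^{2}] < \infty$. I do not anticipate any genuine obstacle; the only points needing a little care are running the induction simultaneously over all modes (because $\tilde{Y}^{i}_{t}$ in~\eqref{Definition:BackwardInduction} couples to $\tilde{Y}^{j}_{t+1}$ for $j \neq i$) and reducing the $\mathcal{S}^{2}$ bound to a time-slice $L^{2}$ bound using that $\mathbb{T}$ is finite.
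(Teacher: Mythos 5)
Your proposal is correct and follows essentially the same route as the paper: backward induction on $t$ carried out simultaneously over all modes, adaptedness read off from the recursion, an $L^{2}$ bound at each time slice via the triangle inequality for the maxima together with conditional Jensen, and $\mathcal{S}^{2}$ membership deduced from the finiteness of $\mathbb{T}$. The only cosmetic difference is that you bound the maximum over $j \neq i$ by a sum while the paper bounds it by $2\max_{j}\mathsf{E}[|\tilde{Y}^{j}_{t+1}|\,\vert\,\mathcal{F}_{t}]$ plus uniform-in-time maxima of the rewards and costs; both estimates are equally valid.
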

\begin{proof}
	One verifies that $\tilde{Y}^{i}$ is a well-defined $\mathbb{F}$-adapted process by proceeding recursively for $t = T,\ldots,0$ using~\eqref{Definition:BackwardInduction}, noting that the conditional expectations are well-defined by the integrability conditions on the rewards and switching costs. Note that in this discrete-time setting, showing $\tilde{Y}^{i} \in \mathcal{S}^{2}$ is equivalent to showing $\tilde{Y}^{i}_{t} \in L^{2}$ for every $t \in \mathbb{T}$. Since $\tilde{Y}^{i}_{T} = \Gamma_{i} \in L^{2}$ for all $i \in \mathbb{I}$, the claim is true for $t = T$. Suppose by induction on $t = T-1,\ldots,0$ that $\tilde{Y}^{j}_{t+1} \in L^{2}$ for all $j \in \mathbb{I}$. The backward induction formula \eqref{Definition:BackwardInduction} gives:
	\begin{align}\label{eq:RecursiveIntegrability1}
	\left|\tilde{Y}^{i}_{t}\right| & = \left|\max\limits_{j \neq i} \left\lbrace -\gamma_{i,j}\left(t\right) + \Psi_{j}(t) + \mathsf{E}\left[\tilde{Y}^{j}_{t+1} \big\vert \mathcal{F}_{t}\right] \right\rbrace \vee \left\lbrace \Psi_{i}(t) + \mathsf{E}\left[\tilde{Y}^{i}_{t+1} \big\vert \mathcal{F}_{t}\right]\right\rbrace\right| \nonumber \\
	& \le \left|\max\limits_{j \neq i} \left\lbrace -\gamma_{i,j}\left(t\right) + \Psi_{j}(t) + \mathsf{E}\left[\tilde{Y}^{j}_{t+1} \big\vert \mathcal{F}_{t}\right] \right\rbrace\right| + \left|\left\lbrace \Psi_{i}(t) + \mathsf{E}\left[\tilde{Y}^{i}_{t+1} \big\vert \mathcal{F}_{t}\right]\right\rbrace\right| \nonumber \\
	& \le 2\max_{j \in I}\mathsf{E}\left[|\tilde{Y}^{j}_{t+1}|\big\vert \mathcal{F}_{t}\right] + \max\limits_{j,k \in \mathbb{I}}\max_{r \in \mathbb{T}}\left|\gamma_{j,k}(r)\right| + 2\max\limits_{j \in \mathbb{I}}\max_{r \in \mathbb{T}}\left|\Psi_{j}(r)\right|
	\end{align}
	Note that by Jensen's inequality (\cite[p.~139]{Rogers2000a}), the conditional expectation satisfies,
	\[
	\mathsf{E}\left[\left|\mathsf{E}\left[|\tilde{Y}^{j}_{t+1}|\big\vert \mathcal{F}_{t}\right]\right|^{2}\right] \le \mathsf{E}\left[|\tilde{Y}^{j}_{t+1}|^{2}\right] < +\infty,
	\]
	and is therefore also in $L^{2}$. In addition to the observation that $\mathbb{I}$ is finite and $\Psi_{j},\gamma_{j,k} \in \mathcal{S}^{2}$ for every $j,k \in \mathbb{I}$, the random variable on the right-hand side of \eqref{eq:RecursiveIntegrability1} is in $L^{2}$. Therefore $\tilde{Y}^{i}_{t} \in L^{2}$, which holds for every $i \in \mathbb{I}$ since $i$ was arbitrary. The case $t = T-1$ has already been verified so the proof by induction is complete.
\end{proof}
\subsubsection{An explicit Snell envelope system.}
	A connection between $\tilde{Y}^{i}$ in Lemma~\ref{Lemma:BackwardInductionExistence} and the Snell envelope becomes apparent upon defining a new process $\big(\hat{Y}^{i}_{t}\big)_{t \in \mathbb{T}}$ for every $i \in \mathbb{I}$ by,
	\begin{equation}\label{eq:ExistenceNewProcess}
	\hat{Y}^{i}_{t} \coloneqq \tilde{Y}^{i}_{t} + \sum_{s = 0}^{t-1} \Psi_{i}(s),
	\end{equation}
	A backward induction formula for $\hat{Y}^{i}_{t}$ is then obtained by adding the $\mathcal{F}_{t}$-measurable term $\sum_{s = 0}^{t-1} \Psi_{i}(s)$ to both sides of \eqref{Definition:BackwardInduction}:
	
	\begin{equation}\label{eq:ExistenceNewProcessBackwardInduction}
	\begin{split}
	\hat{Y}^{i}_{T} & = \hat{U}^{i}_{T},\hspace{1 em}\text{ and for } t = T - 1,\ldots,0 : \\
	\hat{Y}^{i}_{t} & = \hat{U}^{i}_{t} \vee \mathsf{E}\left[\hat{Y}^{i}_{t+1} \big\vert \mathcal{F}_{t}\right].
	\end{split}
	\end{equation}
	where $\big(\hat{U}^{i}_{t}\big)_{t \in \mathbb{T}}$ is the following \emph{explicit gain process}:
	\begin{align}\label{eq:ExistenceExplicitGainProcess}
	\hat{U}^{i}_{t} \coloneqq {} & \max\limits_{j \neq i} \left\lbrace -\gamma_{i,j}\left(t\right) + \sum_{s = 0}^{t-1} \left(\Psi_{i}(s) - \Psi_{j}(s)\right) + \mathsf{E}\left[\hat{Y}^{j}_{t+1} \vert \mathcal{F}_{t}\right] \right\rbrace \mathbf{1}_{\lbrace t < T \rbrace} \nonumber \\
	& + \left\lbrace \sum_{s = 0}^{T-1} \Psi_{i}(s) + \Gamma_{i} \right \rbrace\mathbf{1}_{\lbrace t = T \rbrace} \nonumber \\
	= {} & \sum_{s=0}^{t-1} \Psi_{i}(s) + \max\limits_{j \neq i} \left\lbrace -\gamma_{i,j}\left(t\right) - \sum_{s = 0}^{t-1} \Psi_{j}(s) + \mathsf{E}\left[\hat{Y}^{j}_{t+1} \vert \mathcal{F}_{t}\right] \right\rbrace \mathbf{1}_{\lbrace t < T \rbrace} \nonumber \\
	& + \Gamma_{i}\mathbf{1}_{\lbrace t = T \rbrace}.
	\end{align}
	The processes $\big(\hat{Y}^{i}_{t}\big)_{t \in \mathbb{T}}$ and $\big(\hat{U}^{i}_{t}\big)_{t \in \mathbb{T}}$ belong to $\mathcal{S}^{2}$ by integrability properties of the rewards, switching costs and as $\tilde{Y}^{i} \in \mathcal{S}^{2}$. Proposition~\ref{Proposition:DiscreteSnellEnvelopeProperties} verifies that the backward induction formula uniquely defines $\big(\hat{Y}^{i}_{t}\big)_{t \in \mathbb{T}}$ as the Snell envelope of $\big(\hat{U}^{i}_{t}\big)_{t \in \mathbb{T}}$.
\begin{theorem}[Existence]\label{theorem:ExistenceSnellEnevelope}
	Let $\big(\tilde{Y}^{i}_{t}\big)_{t \in \mathbb{T}}$, $i \in \mathbb{I}$, be the processes defined by backward induction \eqref{Definition:BackwardInduction}. Then, $\mathsf{P}$-a.s. for every $t \in \mathbb{T}$:
	
	\begin{equation}\label{eq:BackwardInductionSnellEnvelope}
	\tilde{Y}^{i}_{t} = \esssup\limits_{\tau \in \mathcal{T}_{t}} \mathsf{E}\left[\sum_{s=t}^{\tau-1}\Psi_{i}(s) + \Gamma_{i}\mathbf{1}_{\lbrace \tau = T \rbrace} + \max\limits_{j \neq i} \left\lbrace \tilde{Y}^{j}_{\tau} - \gamma_{i,j}\left(\tau\right) \right\rbrace \mathbf{1}_{\lbrace \tau < T \rbrace} \biggm \vert \mathcal{F}_{t}\right].
	\end{equation}
	Therefore, $\tilde{Y}^{1},\ldots,\tilde{Y}^{m}$ satisfy the verification theorem.
\end{theorem}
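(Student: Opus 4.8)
The plan is to exploit the reformulation already prepared in the text. Setting $\hat Y^{i}_{t} := \tilde Y^{i}_{t} + \sum_{s=0}^{t-1}\Psi_{i}(s)$, the backward recursion \eqref{eq:ExistenceNewProcessBackwardInduction} together with Proposition~\ref{Proposition:DiscreteSnellEnvelopeProperties} identifies $\hat Y^{i}$ as the Snell envelope of the explicit gain process $\hat U^{i}$ of \eqref{eq:ExistenceExplicitGainProcess}. Observe that the target identity \eqref{eq:BackwardInductionSnellEnvelope}, after adding the $\mathcal F_{t}$-measurable quantity $\sum_{s=0}^{t-1}\Psi_{i}(s)$ to both sides and using $\sum_{s=0}^{\tau-1}\Psi_{i}(s)-\sum_{s=0}^{t-1}\Psi_{i}(s)=\sum_{s=t}^{\tau-1}\Psi_{i}(s)$, is exactly the assertion that $\hat Y^{i}$ is \emph{also} the Snell envelope of the shifted implicit gain process $\tilde U^{i}_{t} := \sum_{s=0}^{t-1}\Psi_{i}(s) + \max_{j\neq i}\{\tilde Y^{j}_{t} - \gamma_{i,j}(t)\}\mathbf 1_{\{t<T\}} + \Gamma_{i}\mathbf 1_{\{t=T\}}$ (this is $U^{i}_{t}$ from \eqref{eq:VerificationImplicitGainProcess} plus the $\Psi_{i}$-shift, with $Y^{j}$ read as $\tilde Y^{j}$; it lies in $\mathcal S^{2}$ by Lemma~\ref{Lemma:BackwardInductionExistence} and the integrability assumptions). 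So the whole proof reduces to showing that $\hat U^{i}$ and $\tilde U^{i}$ have the same Snell envelope, and then translating back by subtracting the deterministic-in-$\omega$-but-random shift.

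First I would record the two-sided estimate $\hat U^{i}_{t} \le \tilde U^{i}_{t} \le \hat Y^{i}_{t}$ for $t<T$ (the case $t=T$ being immediate, where all three equal $\sum_{s=0}^{T-1}\Psi_{i}(s)+\Gamma_{i}$). The left inequality is routine: each maximand $-\gamma_{i,j}(t)+\Psi_{j}(t)+\mathsf E[\tilde Y^{j}_{t+1}\mid\mathcal F_{t}]$ occurring in $\hat U^{i}_{t}$ is dominated by $-\gamma_{i,j}(t)+\tilde Y^{j}_{t}$, since $\Psi_{j}(t)+\mathsf E[\tilde Y^{j}_{t+1}\mid\mathcal F_{t}]$ is one of the two terms over which the maximum in \eqref{Definition:BackwardInduction} defining $\tilde Y^{j}_{t}$ is taken.

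The right inequality $\max_{j\neq i}\{\tilde Y^{j}_{t}-\gamma_{i,j}(t)\}\le\tilde Y^{i}_{t}$ is the crux and is where Assumption~\ref{assumption:SwitchingCosts} is used essentially. Writing $c_{l} := \Psi_{l}(t)+\mathsf E[\tilde Y^{l}_{t+1}\mid\mathcal F_{t}]$, formula \eqref{Definition:BackwardInduction} reads $\tilde Y^{l}_{t}=\max_{k\neq l}\{c_{k}-\gamma_{l,k}(t)\}\vee c_{l}$; expanding $\tilde Y^{j}_{t}$ this way gives, for fixed $j\neq i$, that $\tilde Y^{j}_{t}-\gamma_{i,j}(t)$ is the maximum of the terms $c_{j}-\gamma_{i,j}(t)$ and $c_{k}-\gamma_{i,j}(t)-\gamma_{j,k}(t)$ over $k\neq j$. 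I would then bound each such term by $\tilde Y^{i}_{t}$: the term $c_{j}-\gamma_{i,j}(t)$ is at most $\max_{l\neq i}\{c_{l}-\gamma_{i,l}(t)\}\le\tilde Y^{i}_{t}$ since $j\neq i$; for $k\neq i,j$ the no-arbitrage inequality \eqref{assumption:Discrete-Time-SwitchingCosts-No-Arbitrage} gives $\gamma_{i,j}(t)+\gamma_{j,k}(t)>\gamma_{i,k}(t)$, so the term is $<c_{k}-\gamma_{i,k}(t)\le\tilde Y^{i}_{t}$; and for $k=i$, \eqref{assumption:Discrete-Time-SwitchingCosts-No-Arbitrage} combined with $\gamma_{i,i}(t)=0$ from \eqref{assumption:Discrete-Time-SwitchingCosts-No-Cost} gives $\gamma_{i,j}(t)+\gamma_{j,i}(t)>0$, so the term is $<c_{i}\le\tilde Y^{i}_{t}$. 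Taking maxima (first over $k$, then over $j$) yields the claim. All of this is understood $\mathsf P$-a.s., and since $\mathbb T$ is finite the exceptional set can be chosen uniformly in $t$. This step is the only genuine obstacle; everything else is bookkeeping with the $\Psi_{i}$-shift.

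Finally I would assemble the conclusion. For $t<T$ the estimate above, together with $\mathsf E[\hat Y^{i}_{t+1}\mid\mathcal F_{t}]\le\hat Y^{i}_{t}$, squeezes $\tilde U^{i}_{t}\vee\mathsf E[\hat Y^{i}_{t+1}\mid\mathcal F_{t}]$ between $\hat U^{i}_{t}\vee\mathsf E[\hat Y^{i}_{t+1}\mid\mathcal F_{t}]=\hat Y^{i}_{t}$ and $\hat Y^{i}_{t}\vee\hat Y^{i}_{t}=\hat Y^{i}_{t}$, so $\hat Y^{i}_{t}=\tilde U^{i}_{t}\vee\mathsf E[\hat Y^{i}_{t+1}\mid\mathcal F_{t}]$, while $\hat Y^{i}_{T}=\tilde U^{i}_{T}$. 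Thus $\hat Y^{i}$ solves the recursion characterising the Snell envelope of $\tilde U^{i}$, and by Proposition~\ref{Proposition:DiscreteSnellEnvelopeProperties} it equals that Snell envelope, i.e.\ $\hat Y^{i}_{t}=\esssup_{\tau\in\mathcal T_{t}}\mathsf E[\tilde U^{i}_{\tau}\mid\mathcal F_{t}]$. Subtracting the $\mathcal F_{t}$-measurable term $\sum_{s=0}^{t-1}\Psi_{i}(s)$ from both sides (which may be pulled through the conditional expectation and out of the essential supremum) produces exactly \eqref{eq:BackwardInductionSnellEnvelope}. Since $\tilde Y^{i}\in\mathcal S^{2}$ by Lemma~\ref{Lemma:BackwardInductionExistence} and \eqref{eq:BackwardInductionSnellEnvelope} is precisely \eqref{eq:VerificationSnellEnvelope} with $Y^{i}$ replaced by $\tilde Y^{i}$, the hypotheses of Theorem~\ref{theorem:verification} are satisfied, so $\tilde Y^{1},\dots,\tilde Y^{m}$ indeed solve the optimal switching problem, which completes the proof.
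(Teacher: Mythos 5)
Your proof is correct, but it takes a genuinely different route from the paper's. The paper also reduces the problem, via the shift $\hat{Y}^{i}_{t} = \tilde{Y}^{i}_{t} + \sum_{s=0}^{t-1}\Psi_{i}(s)$, to showing that $\hat{Y}^{i}$ coincides with the Snell envelope $Z^{i}$ of the shifted implicit gain process (your $\tilde{U}^{i}$, the paper's $\hat{W}^{i}$); but it then proceeds by backward induction on $t$ with hypothesis $Z^{j}_{t+1} = \hat{Y}^{j}_{t+1}$ for all $j$, splits into the cases $\{\theta^{i}_{t} = t\}$ and $\{\theta^{i}_{t} > t\}$ for the optimal stopping time $\theta^{i}_{t}$ of $\hat{U}^{i}$, and in each case derives a contradiction from the no-arbitrage condition \eqref{assumption:Discrete-Time-SwitchingCosts-No-Arbitrage} applied to measurably selected argmax modes $j_{*}$ and $k_{*}$. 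You instead prove the pointwise sandwich $\hat{U}^{i}_{t} \le \tilde{U}^{i}_{t} \le \hat{Y}^{i}_{t}$ directly, the right-hand (crucial) inequality $\max_{j \neq i}\{\tilde{Y}^{j}_{t} - \gamma_{i,j}(t)\} \le \tilde{Y}^{i}_{t}$ coming from a one-step unfolding of \eqref{Definition:BackwardInduction} combined with \eqref{assumption:Discrete-Time-SwitchingCosts-No-Cost}--\eqref{assumption:Discrete-Time-SwitchingCosts-No-Arbitrage}; the squeeze on $\tilde{U}^{i}_{t} \vee \mathsf{E}[\hat{Y}^{i}_{t+1}\mid\mathcal{F}_{t}]$ then shows the two gain processes satisfy the same backward recursion, and Proposition~\ref{Proposition:DiscreteSnellEnvelopeProperties} finishes the argument. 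I checked your case analysis ($k = i$ versus $k \neq i, j$ versus the term $c_{j} - \gamma_{i,j}(t)$) and it is complete, as is your separate treatment of $t = T$ where all three processes coincide. Your version is shorter, avoids both the induction-with-case-split and the measurable selection of argmaxes, and isolates the economically meaningful inequality ``an immediate switch is never strictly profitable'' as an a priori consequence of the recursion; the paper's version, at the cost of more bookkeeping, simultaneously identifies the structure of the optimal behaviour (the sets $\{\theta^{i}_{t} = t\}$ and the optimal modes) along the way. Both correctly locate the no-arbitrage assumption as the essential ingredient.
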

\begin{proof}
	For notational convenience, introduce a new process $\big(\hat{W}^{i}_{t}\big)_{t \in \mathbb{T}}$ which is defined for $t \in \mathbb{T}$ by
	\begin{equation}\label{eq:Discrete-Time-Optimal-Switching-Implicit-Gain}
	\hat{W}^{i}_{t} \coloneqq \sum_{s=0}^{t-1} \Psi_{i}(s) + \Gamma_{i}\mathbf{1}_{\lbrace t = T \rbrace} + \max\limits_{j \neq i} \left\lbrace -\gamma_{i,j}\left(t\right) - \sum_{s = 0}^{t-1} \Psi_{j}(s) + \hat{Y}^{j}_{t} \right\rbrace \mathbf{1}_{\lbrace t < T \rbrace}. 
	\end{equation}
	Note that $\hat{W}^{i} \in \mathcal{S}^{2}$ by the properties of $\Gamma_{i},\Psi_{i},\gamma_{i,j}$ and $\hat{Y}^{j}$ for $i,j \in \mathbb{I}$. Equation~\eqref{eq:BackwardInductionSnellEnvelope} can be proved if 
	\begin{equation}\label{eq:BackwardInductionSnellEnvelopeNewProcess}
	\forall t \in \mathbb{T}: \hat{Y}^{i}_{t} = \esssup\limits_{\tau \in \mathcal{T}_{t}} \mathsf{E}\bigl[\hat{W}^{i}_{\tau} \big \vert \mathcal{F}_{t}\bigr], \enskip \mathsf{P}-\text{a.s.}
	\end{equation}
	
	Indeed, if equation~\eqref{eq:BackwardInductionSnellEnvelopeNewProcess} is true then by~\eqref{eq:ExistenceNewProcess}: $\mathsf{P}-a.s.$,
	\begin{align*}
	\tilde{Y}^{i}_{t} & = \esssup\limits_{\tau \in \mathcal{T}_{t}} \mathsf{E}\bigl[\hat{W}^{i}_{\tau} \big \vert \mathcal{F}_{t}\bigr] - \sum_{s = 0}^{t-1} \Psi_{i}(s) \\
	& = \esssup\limits_{\tau \in \mathcal{T}_{t}} \mathsf{E}\left[\sum_{s = 0}^{\tau-1} \Psi_{i}(s) + \Gamma_{i}\mathbf{1}_{\lbrace \tau = T \rbrace} + \max\limits_{j \neq i} \left\lbrace \tilde{Y}^{j}_{\tau} - \gamma_{i,j}\left(\tau\right) \right\rbrace \mathbf{1}_{\lbrace \tau < T \rbrace} \biggm \vert \mathcal{F}_{t}\right] - \sum_{s = 0}^{t-1} \Psi_{i}(s) \\
	& = \esssup\limits_{\tau \in \mathcal{T}_{t}} \mathsf{E}\left[\sum_{s = t}^{\tau-1} \Psi_{i}(s) + \Gamma_{i}\mathbf{1}_{\lbrace \tau = T \rbrace} + \max\limits_{j \neq i} \left\lbrace \tilde{Y}^{j}_{\tau} - \gamma_{i,j}\left(\tau\right) \right\rbrace \mathbf{1}_{\lbrace \tau < T \rbrace} \biggm \vert \mathcal{F}_{t}\right].
	\end{align*}
	
	In order to prove~\eqref{eq:BackwardInductionSnellEnvelopeNewProcess}, first note by Proposition~\ref{Proposition:DiscreteSnellEnvelopeProperties} the Snell envelope of $\hat{W}^{i}$ exists and, denoting it by $Z^{i} = (Z^{i}_{t})_{t \in \mathbb{T}}$, satisfies
	\[
	Z^{i}_{t} = \esssup\limits_{\tau \in \mathcal{T}_{t}} \mathsf{E}\bigl[\hat{W}^{i}_{\tau} \big \vert \mathcal{F}_{t}\bigr]
	\]
	as well as the backward induction formula
	\begin{equation}\label{eq:Discrete-Time-Optimal-Switching-Backward-Induction-2}
	\begin{split}
	Z^{i}_{T} & = \hat{W}^{i}_{T} ,\hspace{1 em}\text{ and for } t = T - 1,\ldots,0 : \\
	Z^{i}_{t} & = \hat{W}^{i}_{t} \vee \mathsf{E}\bigl[Z^{i}_{t+1} \big\vert \mathcal{F}_{t}\bigr].
	\end{split}
	\end{equation}
	Thus establishing \eqref{eq:BackwardInductionSnellEnvelopeNewProcess} is equivalent to showing that $\hat{Y}^{i}$ is a modification of $Z^{i}$ defined in \eqref{eq:Discrete-Time-Optimal-Switching-Backward-Induction-2}. Note that by Proposition \MakeUppercase{\romannumeral 2}.36.5 of \cite{Rogers2000a} this would also mean that $\hat{Y}^{i}$ and $Z^{i}$ are indistinguishable. This shall be proved inductively.
	
	Note that $Z^{i}_{T} = \hat{W}^{i}_{T} = \hat{U}^{i}_{T} = \hat{Y}^{i}_{T}$ almost surely for every $i \in \mathbb{I}$. Suppose inductively for $t = T-1,\ldots,0$ that for all $i \in \mathbb{I}$, $Z^{i}_{t+1} = \hat{Y}^{i}_{t+1}$ $\mathsf{P}$-a.s. For every $i \in \mathbb{I}$ define the stopping time $\theta^{i}_{t}$ as follows
	\begin{align}\label{Definition:BackwardInductionStoppingTimes}
	\theta^{i}_{t} = \inf\left\lbrace t \le s \le T \colon \hat{Y}^{i}_{s} = \hat{U}^{i}_{s} \right\rbrace
	\end{align}
	noting that $t \le \theta^{i}_{t} \le T$ almost surely. The following lines will establish $Z^{i}_{t} = \hat{Y}^{i}_{t}$ separately on the events $\{\theta^{i}_{t} = t\}$ and $\{\theta^{i}_{t} > t\} \equiv \{\theta^{i}_{t} \ge t + 1\}$. Since $\mathsf{P}(\{\theta^{i}_{t} \ge t\}) = 1$, the previous claim would lead to $Z^{i}_{t} = \hat{Y}^{i}_{t}$ a.s. and the induction argument proves that $\hat{Y}^{i}$ is a modification of $Z^{i}$.
	
	\paragraph{Case 1: $Z^{i}_{t} = \hat{Y}^{i}_{t} ~on~ \{\theta^{i}_{t} = t\}$.\\}
	Since $\hat{Y}^{j}$ is the Snell envelope of $\hat{U}^{j}$ for $j \in \mathbb{I}$, it is a supermartingale and by definition of $\hat{U}^{i}$ and $\hat{W}^{i}$ this leads to $\hat{U}^{i} \le \hat{W}^{i} \le Z^{i}$ for $i \in \mathbb{I}$. Then, using \eqref{Definition:BackwardInductionStoppingTimes}, the backward induction formula and the induction hypothesis one gets
	\begin{equation}\label{eq:Proof-BackwardInductionSnellEnvelopeNewProcess-1}
	\hat{W}^{i}_{t} \ge \hat{U}^{i}_{t} = \hat{Y}^{i}_{t} \ge \mathsf{E}\bigl[\hat{Y}^{i}_{t+1} \big\vert \mathcal{F}_{t}\bigr] = \mathsf{E}\bigl[Z^{i}_{t+1} \vert \mathcal{F}_{t}\bigr] \enskip \text{on} \enskip \{\theta^{i}_{t} = t\}.
	\end{equation}
	
	Using the backward induction formula \eqref{eq:Discrete-Time-Optimal-Switching-Backward-Induction-2} for $Z^{i}$ and \eqref{eq:Proof-BackwardInductionSnellEnvelopeNewProcess-1} above also shows that
	\begin{equation}\label{eq:Proof-BackwardInductionSnellEnvelopeNewProcess-2}
	\hat{W}^{i}_{t} = Z^{i}_{t} \enskip \text{on} \enskip \{\theta^{i}_{t} = t\}
	\end{equation}
	Using \eqref{eq:Proof-BackwardInductionSnellEnvelopeNewProcess-2} and finiteness of $\mathbb{I}$ shows that there exists an $\mathcal{F}_{\theta^{i}_{t}}$-measurable mode $j_{*}$ (that is, $j_{*}$ is an $\mathcal{F}_{\theta^{i}_{t}}$-measurable $\mathbb{I}$-valued random variable) such that
	\begin{equation}\label{eq:Proof-BackwardInductionSnellEnvelopeNewProcess-3}
	\begin{split}
	Z^{i}_{t} & = \sum_{s=0}^{t-1} \Psi_{i}(s) - \gamma_{i,j_{*}}(t) - \sum_{s = 0}^{t-1} \Psi_{j_{*}}(s) + \hat{Y}^{j_{*}}_{t}, \\
	j_{*} & = \argmax\limits_{j \neq i} \left\lbrace -\gamma_{i,j}\left(t\right) - \sum_{s = 0}^{t-1} \Psi_{j}(s) + \hat{Y}^{j}_{t} \right\rbrace \enskip \text{on} \enskip \{\theta^{i}_{t} = t\}.
	\end{split}
	\end{equation}
	Let us show that
	\begin{equation}\label{eq:Proof-BackwardInductionSnellEnvelopeNewProcess-4}
	\mathbf{1}_{\{\theta^{i}_{t} = t\}}\hat{Y}^{j_{*}}_{t} = \mathbf{1}_{\{\theta^{i}_{t} = t\}}\mathsf{E}\bigl[\hat{Y}^{j_{*}}_{t+1} \big\vert \mathcal{F}_{t}\bigr] \quad \mathsf{P}-\text{a.s.}
	\end{equation}
	Since $j_{*}$ is $\mathcal{F}_{\theta^{i}_{t}}$-measurable, one has for $t \le r \le T$
	\[
	\mathsf{E}\left[\sum_{j \in \mathbb{I}}\mathbf{1}_{\{j_{*} = j\}}\hat{Y}^{j}_{r} \biggm\vert \mathcal{F}_{t}\right] = \sum_{j \in \mathbb{I}}\mathbf{1}_{\{j_{*} = j\}}\mathsf{E}\bigl[\hat{Y}^{j}_{r} \big\vert \mathcal{F}_{t}\bigr] \le \sum_{j \in \mathbb{I}}\mathbf{1}_{\{j_{*} = j\}}\hat{Y}^{j}_{t} \enskip \text{ on } \enskip \{\theta^{i}_{t} = t \} 
	\]
	so that $\hat{Y}^{j_{*}} \coloneqq \sum_{j \in \mathbb{I}}\hat{Y}^{j}\mathbf{1}_{\{j_{*} = j\}}$ is a supermartingale on $[\theta^{i}_{t},T]$. Now if \eqref{eq:Proof-BackwardInductionSnellEnvelopeNewProcess-4} is not true, by the supermartingale property of $\hat{Y}^{j_{*}}$ the event $A^{i}_{t}$ defined by
	\[
	A^{i}_{t} \coloneqq \big\{\hat{Y}^{j_{*}}_{t} > \mathsf{E}\bigl[\hat{Y}^{j_{*}}_{t+1} \big \vert \mathcal{F}_{t}\bigr]\big\} \cap \{\theta^{i}_{t} = t\}
	\]
	has positive probability. If $\mathsf{P}(A^{i}_{t}) > 0$, there exists an $\mathcal{F}_{\theta^{i}_{t}}$-measurable mode $k_{*}$ such that
	\[
	\hat{Y}^{j_{*}}_{t} = \sum_{s=0}^{t-1} \Psi_{j_{*}}(s) -\gamma_{j_{*},k_{*}}(t) - \sum_{s = 0}^{t-1} \Psi_{k_{*}}(s) + \mathsf{E}\bigl[\hat{Y}^{k_{*}}_{t+1} \vert \mathcal{F}_{t} \bigr] \enskip \text{on} \enskip A^{i}_{t}
	\]
	This leads to
	\begin{align}
	Z^{i}_{t} = {} & \sum_{s=0}^{t-1} \Psi_{i}(s) -\gamma_{i,j_{*}}(t) - \sum_{s = 0}^{t-1} \Psi_{j_{*}}(s) + \hat{Y}^{j_{*}}_{t}  \nonumber \\
	= {} & \sum_{s=0}^{t-1} \Psi_{i}(s) -\gamma_{i,j_{*}}(t) - \sum_{s = 0}^{t-1} \Psi_{j_{*}}(s) + \sum_{s=0}^{t-1} \Psi_{j_{*}}(s) \nonumber \\
	& -\gamma_{j_{*},k_{*}}(t) - \sum_{s = 0}^{t-1} \Psi_{k_{*}}(s) + \mathsf{E}\bigl[\hat{Y}^{k_{*}}_{t+1} \big\vert \mathcal{F}_{t} \bigr] \nonumber \\
	& < \sum_{s=0}^{t-1} \Psi_{i}(s) -\gamma_{i,k_{*}}(t) - \sum_{s = 0}^{t-1} \Psi_{k_{*}}(s) + \hat{Y}^{k_{*}}_{t} \label{eq:Proof-BackwardInductionSnellEnvelopeNewProcess-5}  \enskip \text{on} \enskip A^{i}_{t}
	\end{align}
	where the inequality comes from the no-arbitrage condition~\eqref{assumption:Discrete-Time-SwitchingCosts-No-Arbitrage} and the supermartingale property of $\hat{Y}^{k_{*}}$ on $[\theta^{i}_{t},T]$. However, this contradicts the optimality of $j_{*}$ and therefore shows that \eqref{eq:Proof-BackwardInductionSnellEnvelopeNewProcess-4} holds.
	
	Using \eqref{eq:Proof-BackwardInductionSnellEnvelopeNewProcess-3} together with \eqref{eq:Proof-BackwardInductionSnellEnvelopeNewProcess-4} and the definition of $\hat{Y}^{i}$ yields:
	\[
	Z^{i}_{t} = \sum_{s=0}^{t-1} \Psi_{i}(s) -\gamma_{i,j_{*}}(t) - \sum_{s = 0}^{t-1} \Psi_{j_{*}}(s) + \mathsf{E}\bigl[\hat{Y}^{j_{*}}_{t+1} \big\vert \mathcal{F}_{t}\bigr] \le \hat{Y}^{i}_{t} \enskip \text{on} \enskip \{\theta^{i}_{t} = t\}
	\]
	and using this with \eqref{eq:Proof-BackwardInductionSnellEnvelopeNewProcess-1} and \eqref {eq:Proof-BackwardInductionSnellEnvelopeNewProcess-2} gives:
	\begin{equation}\label{eq:Proof-BackwardInductionSnellEnvelopeNewProcess-6}
	\hat{W}^{i}_{t} = \hat{U}^{i}_{t} = \hat{Y}^{i}_{t} = Z^{i}_{t} \enskip \text{on} \enskip \{\theta^{i}_{t} = t\}.
	\end{equation}
	
	\paragraph{Case 2: $Z^{i}_{t} = \hat{Y}^{i}_{t} ~on~ \{\theta^{i}_{t} \ge t + 1\}$.\\}
	Note that $\{\theta^{i}_{t} \ge t + 1\} \equiv \{\theta^{i}_{t} > t\}$ and is therefore $\mathcal{F}_{t}$-measurable. The properties of the Snell envelopes $\hat{Y}^{i}$ and $Z^{i}$ together with the induction hypothesis then give
	\begin{equation}\label{eq:Proof-BackwardInductionSnellEnvelopeNewProcess-7}
	\hat{Y}^{i}_{t} = \mathsf{E}\bigl[\hat{Y}^{i}_{t+1} \big \vert \mathcal{F}_{t}\bigr] = \mathsf{E}\bigl[Z^{i}_{t+1} \big \vert \mathcal{F}_{t}\bigr] \le Z^{i}_{t} \enskip \text{on} \enskip \{\theta^{i}_{t} \ge t + 1\}.
	\end{equation}
	Suppose that $Z^{i}$ is a strict supermartingale on $\{\theta^{i}_{t} \ge t + 1\}$ with positive probability. Then the $\mathcal{F}_{t}$-measurable event $B^{i}_{t}$ defined by
	\[
	B^{i}_{t} \coloneqq \left\{\mathsf{E}\left[Z^{i}_{t+1} \big \vert \mathcal{F}_{t}\right] < Z^{i}_{t} \right\} \cap \{\theta^{i}_{t} \ge t + 1\}
	\]
	has positive probability, and implies the existence of an $\mathcal{F}_{t}$-measurable mode $j_{*}$ such that
	\begin{equation}\label{eq:Proof-BackwardInductionSnellEnvelopeNewProcess-8}
	\begin{split}
	\hat{Y}^{i}_{t} < Z^{i}_{t} & = \sum_{s=0}^{t-1} \Psi_{i}(s) - \gamma_{i,j_{*}}(t) - \sum_{s = 0}^{t-1} \Psi_{j_{*}}(s) + \hat{Y}^{j_{*}}_{t}, \\
	j_{*} & = \argmax\limits_{j \neq i} \left\lbrace -\gamma_{i,j}\left(t\right) - \sum_{s = 0}^{t-1} \Psi_{j}(s) + \hat{Y}^{j}_{t} \right\rbrace \enskip \text{on} \enskip B^{i}_{t}.
	\end{split}
	\end{equation}
	But by definition of $\hat{Y}^{i}$, it is true that
	\[
	\hat{Y}^{i}_{t} \ge \sum_{s=0}^{t-1} \Psi_{i}(s) - \gamma_{i,j_{*}}(t) - \sum_{s = 0}^{t-1} \Psi_{j_{*}}(s) + \mathsf{E}\bigl[\hat{Y}^{j_{*}}_{t+1} \big\vert \mathcal{F}_{t}\bigr] \enskip \text{on} \enskip B^{i}_{t}
	\]
	and using this in \eqref{eq:Proof-BackwardInductionSnellEnvelopeNewProcess-8} shows
	\[
	\mathsf{E}\bigl[\hat{Y}^{j_{*}}_{t+1} \big\vert \mathcal{F}_{t}\bigr] < \hat{Y}^{j_{*}}_{t} \enskip \text{on} \enskip B^{i}_{t}.
	\]
	Using the same arguments leading up to \eqref{eq:Proof-BackwardInductionSnellEnvelopeNewProcess-5}, one can show that there exists an $\mathcal{F}_{t}$-measurable mode $k_{*}$ such that
	\begin{align*}
	Z^{i}_{t} = {} & \sum_{s=0}^{t-1} \Psi_{i}(s) -\gamma_{i,j_{*}}(t) - \sum_{s = 0}^{t-1} \Psi_{j_{*}}(s) + \hat{Y}^{j_{*}}_{t}  \\
	= {} & \sum_{s=0}^{t-1} \Psi_{i}(s) -\gamma_{i,j_{*}}(t) - \sum_{s = 0}^{t-1} \Psi_{j_{*}}(s) + \sum_{s=0}^{t-1} \Psi_{j_{*}}(s) \nonumber \\
	& -\gamma_{j_{*},k_{*}}(t) - \sum_{s = 0}^{t-1} \Psi_{k_{*}}(s) + \mathsf{E}\bigl[\hat{Y}^{k_{*}}_{t+1} \vert \mathcal{F}_{t} \bigr] \nonumber \\
	& < \sum_{s=0}^{t-1} \Psi_{i}(s) -\gamma_{i,k_{*}}(t) - \sum_{s = 0}^{t-1} \Psi_{k_{*}}(s) + \hat{Y}^{k_{*}}_{t}  \enskip \text{on} \enskip B^{i}_{t}
	\end{align*}
	which contradicts the optimality of $j_{*}$. Thus $\mathsf{P}\big(B^{i}_{t}\big) = 0$ and this shows
	\begin{equation}\label{eq:Proof-BackwardInductionSnellEnvelopeNewProcess-9}
	\mathbf{1}_{\{\theta^{i}_{t} \ge t + 1\}}\mathsf{E}\bigl[Z^{i}_{t+1} \big \vert \mathcal{F}_{t}\bigr] = Z^{i}_{t}\mathbf{1}_{\{\theta^{i}_{t} \ge t + 1\}},\enskip \mathsf{P}-\text{a.s.}
	\end{equation}
	Finally, putting \eqref{eq:Proof-BackwardInductionSnellEnvelopeNewProcess-9}, \eqref{eq:Proof-BackwardInductionSnellEnvelopeNewProcess-7} and \eqref{eq:Proof-BackwardInductionSnellEnvelopeNewProcess-6} together gives
	\[
	\hat{Y}^{i}_{t} = Z^{i}_{t},\quad \mathsf{P}-\text{a.s.}
	\]
	Since the case $t = T-1$ is true and $i \in \mathbb{I}$ was arbitrary, the proof by induction is complete. Therefore, for every $i \in \mathbb{I}$,
	\[
	\forall t \in \mathbb{T}: \hat{Y}^{i}_{t} = Z^{i}_{t} \quad \mathsf{P}-\text{a.s.}
	\]
	which means $\hat{Y}^{i}$ is a modification of (and therefore indistinguishable from) $Z^{i}$, whence~\eqref{eq:BackwardInductionSnellEnvelopeNewProcess} follows.
\end{proof}
\begin{remark}
	By equation~\eqref{eq:BackwardInductionSnellEnvelopeNewProcess} and Proposition~\ref{Proposition:DiscreteSnellEnvelopeProperties}, the arguments used in the previous proof show that $\hat{Y}^{i},~i \in \mathbb{I}$, satisfies:
	\[
	\begin{split}
	\tilde{Y}^{i}_{T} & = \Gamma_{i},\hspace{1 em}\text{ and for } t = T - 1,\ldots,0 : \\
	\tilde{Y}^{i}_{t} & = \max\limits_{j \neq i} \left\lbrace -\gamma_{i,j}\left(t\right) + \tilde{Y}^{j}_{t} \right\rbrace \vee \left\lbrace \Psi_{i}(t) + \mathsf{E}\bigl[\tilde{Y}^{i}_{t+1} \big\vert \mathcal{F}_{t}\bigr]\right\rbrace.
	\end{split}
	\]
	which is the (non-Markovian) analogue to the implicit backward induction formula of \cite{Gassiat2012}. However, the steps carried out here are in the reverse direction to \cite[pp.~2037]{Gassiat2012}, where the explicit backward induction formula was derived from the implicit one. The latter was obtained from an approximation to a continuous-time optimal switching problem with non-negative switching costs.
\end{remark}

\section{Conclusion}
In this paper we used a probabilistic approach to solve a finite-horizon discrete-time optimal switching problem for a model with a running reward, terminal reward and signed switching costs. The approach, which works without Markovian assumptions, reduced the switching problem to iterated optimal stopping problems defined in terms of (coupled) Snell envelopes, just as in the verification theorem of \cite{Djehiche2009} in the continuous-time case. We were able to define the Snell envelopes by an explicit backward induction scheme, thereby extending the numerical methods of \cite{Carmona2008,Gassiat2012} to problems with negative switching costs.

\appendix
\numberwithin{equation}{section}
\section{Supplementary proofs}\label{Section:Appendix}
\begin{lemma}\label{lemma:VerificationLemma}
	For each $i \in \mathbb{I}$, let $U^{i}  \in \mathcal{S}^{2}$ and $Y^{i} \in \mathcal{S}^{2}$ be defined as in equations~\eqref{eq:VerificationImplicitGainProcess} and \eqref{eq:VerificationSnellEnvelope2} respectively. Let $\tau_{n} \in \mathcal{T}$ and $\iota_{n} \colon \Omega \to \mathbb{I}$ be $\mathcal{F}_{\tau_{n}}$-measurable. Then,
	\begin{equation}\label{eq:VerificationLemmaClaim} 
	Y^{\iota_{n}}_{t} = \esssup\limits_{\tau \in \mathcal{T}_{t}}\mathsf{E}\left[\sum_{s = t}^{\tau-1} \Psi_{\iota_{n}}(s) + U^{\iota_{n}}_{\tau} \biggm \vert \mathcal{F}_{t}\right] \enskip \text{on} \enskip [\tau_{1},T].
	\end{equation}
\end{lemma}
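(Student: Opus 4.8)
The plan is to transfer the optimal-stopping characterization \eqref{eq:VerificationSnellEnvelope2}, which holds for each \emph{deterministic} mode $j \in \mathbb{I}$, to the \emph{random} mode $\iota_{n}$ by ``mixing'' over $j$, the key enabler being that $\iota_{n}$ is $\mathcal{F}_{\tau_{n}}$-measurable. As noted just before Theorem~\ref{theorem:verification}, for each fixed $j$ the shifted process $\widehat{Y}^{j}_{t} := Y^{j}_{t} + \sum_{s=0}^{t-1}\Psi_{j}(s)$ is the Snell envelope of $\widehat{U}^{j}_{t} := U^{j}_{t} + \sum_{s=0}^{t-1}\Psi_{j}(s)$; hence by Proposition~\ref{Proposition:DiscreteSnellEnvelopeProperties}, $\widehat{Y}^{j}$ is a supermartingale dominating $\widehat{U}^{j}$, it equals $\esssup_{\tau \in \mathcal{T}_{r}}\mathsf{E}[\widehat{U}^{j}_{\tau}\vert\mathcal{F}_{r}]$ at every $r \in \mathbb{T}$, and the stopping time $\sigma^{j}_{r} := \inf\{s \ge r : \widehat{Y}^{j}_{s} = \widehat{U}^{j}_{s}\} \in \mathcal{T}_{r}$ attains it: $\widehat{Y}^{j}_{r} = \mathsf{E}[\widehat{U}^{j}_{\sigma^{j}_{r}}\vert\mathcal{F}_{r}]$.

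I would then fix a deterministic $t \in \mathbb{T}$ and work throughout on the event $\{\tau_{n} \le t\} \in \mathcal{F}_{t}$, on which $\mathbf{1}_{\{\iota_{n}=j\}}$ is $\mathcal{F}_{s}$-measurable for every $s \ge t$ (since $\mathcal{F}_{\tau_{n}} \cap \{\tau_{n} \le t\} \subseteq \mathcal{F}_{t} \subseteq \mathcal{F}_{s}$). Using the convention $Z^{\iota_{n}} = \sum_{j\in\mathbb{I}}\mathbf{1}_{\{\iota_{n}=j\}}Z^{j}$, set $\widehat{Y}^{\iota_{n}}_{s} := \sum_{j}\mathbf{1}_{\{\iota_{n}=j\}}\widehat{Y}^{j}_{s}$ and $\widehat{U}^{\iota_{n}}_{s} := \sum_{j}\mathbf{1}_{\{\iota_{n}=j\}}\widehat{U}^{j}_{s}$, both in $\mathcal{S}^{2}$ since $\mathbb{I}$ is finite. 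For the lower-bound direction, on $\{\tau_{n}\le t\}$ one has for $t \le s < T$ that $\mathsf{E}[\widehat{Y}^{\iota_{n}}_{s+1}\vert\mathcal{F}_{s}] = \sum_{j}\mathbf{1}_{\{\iota_{n}=j\}}\mathsf{E}[\widehat{Y}^{j}_{s+1}\vert\mathcal{F}_{s}] \le \sum_{j}\mathbf{1}_{\{\iota_{n}=j\}}\widehat{Y}^{j}_{s} = \widehat{Y}^{\iota_{n}}_{s}$, so $(\widehat{Y}^{\iota_{n}}_{s})_{s \ge t}$ is a supermartingale on $[t,T]$ dominating $(\widehat{U}^{\iota_{n}}_{s})_{s \ge t}$; optional sampling then yields $\widehat{Y}^{\iota_{n}}_{t} \ge \mathsf{E}[\widehat{U}^{\iota_{n}}_{\tau}\vert\mathcal{F}_{t}]$ for all $\tau \in \mathcal{T}_{t}$, hence $\widehat{Y}^{\iota_{n}}_{t} \ge \esssup_{\tau\in\mathcal{T}_{t}}\mathsf{E}[\widehat{U}^{\iota_{n}}_{\tau}\vert\mathcal{F}_{t}]$ on $\{\tau_{n}\le t\}$.

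For the reverse inequality I would splice the mode-specific optimal stopping times into $\tau^{*} := \sum_{j}\mathbf{1}_{\{\iota_{n}=j\}}\sigma^{j}_{t}$ (extended by $T$ off $\{\tau_{n}\le t\}$): because $\{\{\iota_{n}=j\}\}_{j}$ is an $\mathcal{F}_{t}$-measurable partition of $\{\tau_{n}\le t\}$ and each $\sigma^{j}_{t}\in\mathcal{T}_{t}$, the random time $\tau^{*}$ belongs to $\mathcal{T}_{t}$, and on $\{\tau_{n}\le t\}$ one has $\widehat{U}^{\iota_{n}}_{\tau^{*}} = \sum_{j}\mathbf{1}_{\{\iota_{n}=j\}}\widehat{U}^{j}_{\sigma^{j}_{t}}$; pulling the $\mathcal{F}_{t}$-measurable indicators out of $\mathsf{E}[\,\cdot\,\vert\mathcal{F}_{t}]$ gives $\mathsf{E}[\widehat{U}^{\iota_{n}}_{\tau^{*}}\vert\mathcal{F}_{t}] = \sum_{j}\mathbf{1}_{\{\iota_{n}=j\}}\mathsf{E}[\widehat{U}^{j}_{\sigma^{j}_{t}}\vert\mathcal{F}_{t}] = \sum_{j}\mathbf{1}_{\{\iota_{n}=j\}}\widehat{Y}^{j}_{t} = \widehat{Y}^{\iota_{n}}_{t}$. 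Combining the two bounds establishes $\widehat{Y}^{\iota_{n}}_{t} = \esssup_{\tau\in\mathcal{T}_{t}}\mathsf{E}[\widehat{U}^{\iota_{n}}_{\tau}\vert\mathcal{F}_{t}]$ on $\{\tau_{n}\le t\}$. Finally I would undo the shift: on $\{\tau_{n}\le t\}$ the quantity $\sum_{s=0}^{t-1}\Psi_{\iota_{n}}(s) = \sum_{j}\mathbf{1}_{\{\iota_{n}=j\}}\sum_{s=0}^{t-1}\Psi_{j}(s)$ is $\mathcal{F}_{t}$-measurable and $\widehat{U}^{\iota_{n}}_{\tau} - \sum_{s=0}^{t-1}\Psi_{\iota_{n}}(s) = \sum_{s=t}^{\tau-1}\Psi_{\iota_{n}}(s) + U^{\iota_{n}}_{\tau}$, so subtracting it from both sides of the last identity (which commutes with $\esssup$ and $\mathsf{E}[\,\cdot\,\vert\mathcal{F}_{t}]$) produces exactly \eqref{eq:VerificationLemmaClaim} on $\{\tau_{n}\le t\}$; letting $t$ range over $\mathbb{T}$ gives the claim on the stochastic interval $[\tau_{n},T]$.

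The routine parts — the supermartingale mixing, optional sampling, and removal of the $\mathcal{F}_{t}$-measurable shift — I expect to be painless. The step that needs genuine care, and which I would write out in full, is the measurability bookkeeping around the event $\{\tau_{n}\le t\}$: namely that restricting to this event legitimately makes $\iota_{n}$ behave as an $\mathcal{F}_{t}$-measurable quantity, which is precisely what justifies both commuting conditional expectations with the random mixing and, more delicately, concluding that the mode-dependent splice $\tau^{*}$ of the stopping times $\sigma^{j}_{t}$ is itself an $\mathbb{F}$-stopping time lying in $\mathcal{T}_{t}$.
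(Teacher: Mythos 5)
Your proof is correct, and its first half --- shifting by the accumulated running reward so that $Y^{j}+\sum_{s=0}^{t-1}\Psi_{j}(s)$ becomes the Snell envelope of $U^{j}+\sum_{s=0}^{t-1}\Psi_{j}(s)$, then mixing over $j$ with the indicators $\mathbf{1}_{\{\iota_{n}=j\}}$, which are $\mathcal{F}_{s}$-measurable for $s\ge t$ on $\{\tau_{n}\le t\}$, to get a supermartingale dominating the mixed gain process --- is exactly the paper's argument. Where you diverge is in closing the identity: the paper asserts that the mixed process is the \emph{smallest} supermartingale dominating the mixed gain (``similar arguments as above'') and then invokes the Snell envelope characterisation, whereas you prove the reverse inequality directly by splicing the mode-wise optimal stopping times into $\tau^{*}=\sum_{j}\mathbf{1}_{\{\iota_{n}=j\}}\sigma^{j}_{t}$ and checking that this element of $\mathcal{T}_{t}$ attains the value, so the essential supremum is at least $\widehat{Y}^{\iota_{n}}_{t}$. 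Your route is the more self-contained of the two: it replaces an appeal to minimality (which the paper does not actually write out, and which is the least obvious part of the mixing argument) with an explicit witness, at the modest cost of verifying that the spliced random time is a stopping time --- which you do correctly, since $\{\iota_{n}=j\}\cap\{\tau_{n}\le t\}\in\mathcal{F}_{t}$ makes $\{\tau^{*}\le s\}$ a union of $\mathcal{F}_{s}$-measurable sets for $s\ge t$. Both arguments rely on the same measurability bookkeeping you flag at the end, and your handling of it is sound.
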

\begin{proof}
	For notational simplicity define $(\check{U}^{i}_{t})_{t \in \mathbb{T}}$ by $\check{U}^{i}_{t} = \sum_{s = 0}^{t-1} \Psi_{i}(s) + U^{i}_{t}$. For any $i \in \mathbb{I}$ and any time $s \le t$, $\Psi_{i}(s)$ is $\mathcal{F}_{t}$-measurable and using this in equation~\eqref{eq:VerificationSnellEnvelope2} shows,
	\begin{align}
	Y^{i}_{t} & = \esssup\limits_{\tau \in \mathcal{T}_{t}}\mathsf{E}\left[\sum_{s = t}^{\tau-1} \Psi_{i}(s)  + U^{i}_{\tau} \biggm \vert \mathcal{F}_{t}\right] \nonumber\\
	& = \esssup\limits_{\tau \in \mathcal{T}_{t}}\mathsf{E}\left[\sum_{s = 0}^{\tau-1} \Psi_{i}(s) - \sum_{s = 0}^{t-1} \Psi_{i}(s) + U^{i}_{\tau} \biggm \vert \mathcal{F}_{t}\right] \nonumber \\
	& = -\sum_{s = 0}^{t-1}\Psi_{i}(s) + \esssup\limits_{\tau \in \mathcal{T}_{t}}\mathsf{E}\left[\check{U}^{i}_{\tau} \big \vert \mathcal{F}_{t}\right]\label{eq:VerificationLemma1}.
	\end{align}
	
	Since $U^{i},\Psi_{i} \in \mathcal{S}^{2}$, the Snell envelope of the process $\left(\sum_{s = 0}^{t-1} \Psi_{i}(s) + U^{i}_{t}\right)_{t \in \mathbb{T}}$ exists (cf. Proposition~\ref{Proposition:DiscreteSnellEnvelopeProperties}) and is denoted by $\check{Y}^{i}$. Furthermore, using equation~\eqref{eq:VerificationLemma1}, $\check{Y}^{i}$ satisfies
	\begin{equation}\label{eq:VerificationLemmaSnellEnvelopeDiscreteTime}
	\check{Y}^{i}_{t} = \esssup\limits_{\tau \in \mathcal{T}_{t}}\mathsf{E}\left[\check{U}^{i}_{\tau} \big \vert \mathcal{F}_{t}\right] = Y^{i}_{t} + \sum_{s = 0}^{t-1}\Psi_{i}(s)
	\end{equation}
	
	In particular $\check{Y}^{i}$ is the smallest supermartingale which dominates $\check{U}^{i}$. Note that as $Y^{i},\Psi_{i} \in \mathcal{S}^{2}$, the supermartingale property carries over to stopping times by Doob's Optional Sampling Theorem (Theorem~\MakeUppercase{\romannumeral 2}.59.1 of \cite{Rogers2000a}).
	
	Consider the process $\sum_{i \in \mathbb{I}}\mathbf{1}_{\lbrace \iota_{n} = i \rbrace} \check{Y}^{i}$ on $[\tau_{n},T]$ and remember that the sum over $\mathbb{I}$ is finite. Let $r,t \in \mathbb{T}$ be arbitrary times satisfying $r \le t$. Note that the indicator function $\mathbf{1}_{\lbrace \iota_{n} = i \rbrace}$ is non-negative, and each $\mathbf{1}_{\lbrace \iota_{n} = i \rbrace}$ is $\mathcal{F}_{\tau_{n}}$-measurable and therefore $\mathcal{F}_{r}$-measurable on $\{\tau_{n} \le r\}$. Using these observations together with the supermartingale property yields: almost surely,
	\[
	\mathsf{E}\left[\sum\limits_{i \in \mathbb{I}}\mathbf{1}_{\lbrace \iota_{n} = i \rbrace}\check{Y}^{i}_{t} \biggm \vert \mathcal{F}_{r}\right]\mathbf{1}_{\{\tau_{n} \le r\}} = \sum\limits_{i \in \mathbb{I}}\mathbf{1}_{\lbrace \iota_{n} = i \rbrace}\mathsf{E}\left[\check{Y}^{i}_{t} \big \vert \mathcal{F}_{r}\right]\mathbf{1}_{\{\tau_{n} \le r \}} \le \sum\limits_{i \in \mathbb{I}}\mathbf{1}_{\lbrace \iota_{n} = i \rbrace}\mathbf{1}_{\{\tau_{n} \le r \}}\check{Y}^{i}_{r}
	\]
	
	This shows $\sum_{i \in \mathbb{I}}\mathbf{1}_{\lbrace \iota_{n} = i \rbrace}\check{Y}^{i}$ is a supermartingale on $[\tau_{n},T]$. For each $i \in \mathbb{I}$, the dominating property of the Snell envelope and non-negativity of $\mathbf{1}_{\lbrace \iota_{n} = i \rbrace}$ leads to: 
	\[
	\mathbf{1}_{\{\tau_{n} \le t\}}\mathbf{1}_{\lbrace \iota_{n} = i \rbrace}\check{Y}^{i}_{t} \ge \mathbf{1}_{\{\tau_{n} \le t\}}\mathbf{1}_{\lbrace \iota_{n} = i \rbrace}\check{U}^{i}_{t}
	\]
	and summing over $i \in \mathbb{I}$ then gives,
	\begin{align*}
	\check{Y}^{\iota_{n}}_{t} \coloneqq \sum\limits_{i \in \mathbb{I}}\mathbf{1}_{\lbrace \iota_{n} = i \rbrace}\check{Y}^{i}_{t} \ge \sum\limits_{i \in \mathbb{I}}\mathbf{1}_{\lbrace \iota_{n} = i \rbrace}\check{U}^{i}_{t} \eqqcolon \check{U}^{\iota_{n}}_{t} \quad \text{ on } \{\tau_{n} \le t\}.
	\end{align*}
	
	The process $\check{Y}^{\iota_{n}}$ is therefore a supermartingale dominating $\check{U}^{\iota_{n}}$ on $[\tau_{n},T]$. Similar arguments as above can be used to show that $\check{Y}^{\iota_{n}}$ is the smallest supermartingale with this property, and is therefore the Snell envelope of $\check{U}^{\iota_{n}}$. Proposition~\ref{Proposition:DiscreteSnellEnvelopeProperties} leads to a representation for $\check{Y}^{\iota_{n}}$ similar to \eqref{eq:VerificationLemmaSnellEnvelopeDiscreteTime}, and the $\mathcal{F}_{t}$-measurability of the summation term leads to equation~\eqref{eq:VerificationLemmaClaim}.
\end{proof}
\begin{lemma}\label{lemma:AdmissibleOptimalControl}
	Let $\alpha^{*} = \left(\tau^{*}_{n},\iota^{*}_{n}\right)_{n \ge 0}$ be the sequence given in equation~\eqref{eq:OptimalStoppingStrategy}. Suppose that Assumption~\ref{assumption:SwitchingCosts} holds for the switching costs. Then $\alpha^{*} \in \mathcal{A}_{t,i}$.
\end{lemma}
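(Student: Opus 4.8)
The plan is to verify, in turn, the two conditions of Definition~\ref{Definition:AdmissibleSwitchingControl} for $\alpha^{*} = (\tau^{*}_{n},\iota^{*}_{n})_{n \geq 0}$, using throughout that $\mathbb{T}$ is finite, which renders all stopping-time manipulations elementary. Implicit in \eqref{eq:OptimalStoppingStrategy} is a fixed measurable tie-breaking convention (say, that $\iota^{*}_{n}$ is the smallest index $j \neq \iota^{*}_{n-1}$ attaining the maximum), which I adopt so that each $\iota^{*}_{n}$ is a genuine $\mathbb{I}$-valued random variable. Two elementary facts will be used repeatedly: (i) $U^{i}_{t} \leq Y^{i}_{t}$ for every $t$ and $i$, obtained by taking $\tau = t$ in \eqref{eq:VerificationSnellEnvelope2} (the inner sum is then empty and $U^{i}_{t}$ is $\mathcal{F}_{t}$-measurable); and (ii) $Y^{i}_{T} = U^{i}_{T} = \Gamma_{i}$, since $\mathcal{T}_{T} = \{T\}$.

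First I would settle the structural requirements. Each $\tau^{*}_{n}$ is an $\mathbb{F}$-stopping time valued in $\{\tau^{*}_{n-1},\ldots,T\}$: by (ii) the set over which the infimum in \eqref{eq:OptimalStoppingStrategy} is taken a.s.\ contains $T$, so the infimum is attained and $\tau^{*}_{n-1} \leq \tau^{*}_{n} \leq T$, while $\{\tau^{*}_{n} \leq u\} \in \mathcal{F}_{u}$ follows from the usual discrete-time debut computation, using that $\tau^{*}_{n-1}$ is a stopping time, $\iota^{*}_{n-1}$ is $\mathcal{F}_{\tau^{*}_{n-1}}$-measurable, and the $Y^{k}$, $U^{k}$ are adapted. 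With $\tau^{*}_{0} = t$ this gives $t = \tau^{*}_{0} \leq \tau^{*}_{1} \leq \cdots \leq T$ and $\tau^{*}_{n} \in \mathcal{T}_{t}$. For the mode indicators I would induct on $n$: $\iota^{*}_{0} = i$ is constant, hence $\mathcal{F}_{\tau^{*}_{0}}$-measurable; if $\iota^{*}_{n-1}$ is $\mathcal{F}_{\tau^{*}_{n-1}}$-measurable, it is a fortiori $\mathcal{F}_{\tau^{*}_{n}}$-measurable since $\tau^{*}_{n-1} \leq \tau^{*}_{n}$, so each event $A^{\iota^{*}_{n-1}}_{j}$ lies in $\mathcal{F}_{\tau^{*}_{n}}$ by adaptedness of the $Y^{k}$ and the switching-cost processes, whence $\iota^{*}_{n}$ is $\mathcal{F}_{\tau^{*}_{n}}$-measurable. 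Finally $\iota^{*}_{n} \neq \iota^{*}_{n-1}$ for all $n \geq 1$ holds by construction, since the maximum defining $\iota^{*}_{n}$ ranges only over $k \neq \iota^{*}_{n-1}$; in particular $\iota^{*}_{n} \neq \iota^{*}_{n+1}$ a.s.\ for every $n \geq 0$, so condition~2 of Definition~\ref{Definition:AdmissibleSwitchingControl} holds.

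The substantive point is the ``at most one switch per time'' requirement: for $n \geq 1$, $\mathsf{P}(E_{n}) = 0$, where $E_{n} \coloneqq \{\tau^{*}_{n} < T\} \cap \{\tau^{*}_{n} = \tau^{*}_{n+1}\}$. I would argue by contradiction: assume $\mathsf{P}(E_{n}) > 0$ and work pointwise on $E_{n}$, writing $\theta = \tau^{*}_{n}$, $a = \iota^{*}_{n-1}$, $b = \iota^{*}_{n}$, $c = \iota^{*}_{n+1}$. Since $\theta = \tau^{*}_{n}$ belongs to the set over which the infimum in \eqref{eq:OptimalStoppingStrategy} is taken, $Y^{a}_{\theta} = U^{a}_{\theta}$; as $\theta < T$ on $E_{n}$, formula \eqref{eq:VerificationImplicitGainProcess} and the definition of $\iota^{*}_{n}$ give $Y^{a}_{\theta} = U^{a}_{\theta} = \max_{j \neq a}\{Y^{j}_{\theta} - \gamma_{a,j}(\theta)\} = Y^{b}_{\theta} - \gamma_{a,b}(\theta)$. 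Because also $\tau^{*}_{n+1} = \theta < T$ on $E_{n}$, the same reasoning applied one step further gives $Y^{b}_{\theta} = U^{b}_{\theta} = \max_{k \neq b}\{Y^{k}_{\theta} - \gamma_{b,k}(\theta)\} = Y^{c}_{\theta} - \gamma_{b,c}(\theta)$. Now split into two cases. If $c = a$, substituting the second relation into the first yields $\gamma_{a,b}(\theta) + \gamma_{b,a}(\theta) = 0$ on $E_{n}$, contradicting the strict positivity $\gamma_{a,b}(\theta) + \gamma_{b,a}(\theta) > 0$ (the no-round-trip consequence of Assumption~\ref{assumption:SwitchingCosts} recorded after its statement; recall $a \neq b$). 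If $c \neq a$, then $a$, $b$, $c$ are pairwise distinct ($b \neq c$ holds automatically by construction of $\iota^{*}_{n+1}$); since $c \neq a$, mode $c$ competes in the maximum defining $U^{a}_{\theta}$, so $Y^{a}_{\theta} = U^{a}_{\theta} \geq Y^{c}_{\theta} - \gamma_{a,c}(\theta)$, and substituting $Y^{a}_{\theta} = Y^{b}_{\theta} - \gamma_{a,b}(\theta)$ and $Y^{b}_{\theta} = Y^{c}_{\theta} - \gamma_{b,c}(\theta)$ gives $\gamma_{a,c}(\theta) \geq \gamma_{a,b}(\theta) + \gamma_{b,c}(\theta)$ on $E_{n}$, contradicting the no-arbitrage inequality \eqref{assumption:Discrete-Time-SwitchingCosts-No-Arbitrage} for the distinct triple $(a,b,c)$. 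Either way $\mathsf{P}(E_{n}) = 0$, so $\alpha^{*} \in \mathcal{A}_{t,i}$.

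I expect this final case split to be the only genuine obstacle: it is exactly where Assumption~\ref{assumption:SwitchingCosts} --- no cost-free round trip, no profitable detour through an intermediate mode --- is used, and it is the discrete-time, non-Markovian counterpart of the classical observation that an optimal switching strategy never makes two switches at the same instant. Everything else is routine bookkeeping, made straightforward by the finiteness of $\mathbb{T}$.
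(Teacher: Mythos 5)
Your proposal is correct and follows essentially the same route as the paper: routine verification of the stopping-time and measurability requirements, then a contradiction argument on $\{\tau^{*}_{n} < T\}\cap\{\tau^{*}_{n}=\tau^{*}_{n+1}\}$ using the two relations $Y^{a}_{\theta}=Y^{b}_{\theta}-\gamma_{a,b}(\theta)$ and $Y^{b}_{\theta}=Y^{c}_{\theta}-\gamma_{b,c}(\theta)$ together with the no-arbitrage condition \eqref{assumption:Discrete-Time-SwitchingCosts-No-Arbitrage}. Your explicit split into $c=a$ versus $c\neq a$ is just an unpacking of the paper's single-line contradiction (which covers $k=i$ via $\gamma_{i,i}=0$), and your remark about a measurable tie-breaking rule for $\iota^{*}_{n}$ is a sensible clarification rather than a divergence.
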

\begin{proof}
	The times $\left\lbrace\tau^{*}_{n}\right\rbrace_{n \ge 0}$ are non-decreasing by definition, $\tau^{*}_{0} = t$ and each $\tau^{*}_{n} \in \mathcal{T}_{t}$ since $U^{i}$ and $Y^{i}$ are adapted for every $i \in \mathbb{I}$. Corollary \MakeUppercase{\romannumeral 2}-1-4 of \cite{Neveu1975} states that for any adapted process $Z$ and stopping time $\tau$, $Z_{\tau}$ is $\mathcal{F}_{\tau}$-measurable. The sets $A^{\iota^{*}_{n-1}}_{j}$ in equation~\eqref{eq:OptimalStoppingStrategy} are therefore $\mathcal{F}_{\tau^{*}_{n}}$-measurable sets which means the modes $\left\lbrace\iota^{*}_{n}\right\rbrace_{n \ge 0}$ are also $\mathcal{F}_{\tau^{*}_{n}}$-measurable. Furthermore, $\iota^{*}_{n} \neq \iota^{*}_{n+1}$ almost surely for $n \ge 0$.
	
	The last thing to verify is $\mathsf{P}\left(\{\tau^{*}_{n} < T \} \cap \{\tau^{*}_{n} = \tau^{*}_{n+1}\}\right) = 0$ for $n \ge 1$. Assume contrarily that for some $n \ge 1$, the event $\{\tau^{*}_{n} < T \} \cap \{\tau^{*}_{n} = \tau^{*}_{n+1}\}$ has positive probability (recall $\tau^{*}_{n+1} \ge \tau^{*}_{n}$). By equation~\eqref{eq:OptimalStoppingStrategy} for $\tau^{*}_{n}$ and $\tau^{*}_{n+1}$, we have $\mathsf{P}$-almost surely:
	\[
	Y^{\iota^{*}_{n-1}}_{\tau^{*}_{n}} = U^{\iota^{*}_{n-1}}_{\tau^{*}_{n}},\hspace{1em} Y^{\iota^{*}_{n}}_{\tau^{*}_{n+1}} = U^{\iota^{*}_{n}}_{\tau^{*}_{n+1}}.
	\]
	
	By definition of $\iota^{*}_{n}$ and $\iota^{*}_{n+1}$, on the event $\lbrace \tau^{*}_{n} < T \rbrace \cap \lbrace \tau^{*}_{n} = \tau^{*}_{n+1} \rbrace$ it is also true that:
	\begin{equation}\label{eq:AdmissibilityLemmaContradiction}
	Y^{\iota^{*}_{n-1}}_{\tau^{*}_{n}} = -\gamma_{\iota^{*}_{n-1},\iota^{*}_{n}}\left(\tau^{*}_{n}\right) + Y^{\iota^{*}_{n}}_{\tau^{*}_{n}} \quad \text{and} \quad
	Y^{\iota^{*}_{n}}_{\tau^{*}_{n}} = -\gamma_{\iota^{*}_{n},\iota^{*}_{n+1}}\left(\tau^{*}_{n}\right) + Y^{\iota^{*}_{n+1}}_{\tau^{*}_{n}}
	\end{equation}
	
	Note that $\iota^{*}_{n+1}$ is now $\mathcal{F}_{\tau^{*}_{n}}$-measurable since $\tau^{*}_{n} = \tau^{*}_{n+1}$. Suppose that $\iota^{*}_{n-1} = i$, $\iota^{*}_{n} = j$ and $\iota^{*}_{n+1} = k$ for any three modes $i,j,k \in \mathbb{I}$ which necessarily satisfy $i \neq j$ and $j \neq k$ by definition of $\lbrace\iota^{*}_{n}\rbrace$. Substituting for $Y^{\iota^{*}_{n}}_{\tau^{*}_{n}}$ in \eqref{eq:AdmissibilityLemmaContradiction} and using condition~\eqref{assumption:Discrete-Time-SwitchingCosts-No-Arbitrage} for the switching costs gives,
	\[
	Y^{i}_{\tau^{*}_{n}} = -\gamma_{i,j}\left(\tau^{*}_{n}\right) -\gamma_{j,k}\left(\tau^{*}_{n}\right) + Y^{k}_{\tau^{*}_{n}} < -\gamma_{i,k}\left(\tau^{*}_{n}\right) + Y^{k}_{\tau^{*}_{n}}.
	\]	
	The previous arguments have just shown
	\[
	-\gamma_{i,k}\left(\tau^{*}_{n}\right) + Y^{k}_{\tau^{*}_{n}} > -\gamma_{i,j}\left(\tau^{*}_{n}\right) + Y^{j}_{\tau^{*}_{n}}= \max\limits_{l \neq i} \left\lbrace -\gamma_{i,l}\left(\tau^{*}_{n}\right) + Y^{l}_{\tau^{*}_{n}} \right\rbrace
	\]
	which is a contradiction for every $k \in \mathbb{I}$. Since $i \neq j$ and $j \neq k$ were arbitrary modes, for $n \ge 1$ it is true that $\mathsf{P}\left(\{\tau^{*}_{n} < T \} \cap \{\tau^{*}_{n} = \tau^{*}_{n+1}\}\right) = 0$.
\end{proof}

\section*{Acknowledgments}
The author would also like to acknowledge all those whose comments led to this improved version of the manuscript, including his PhD supervisor J. Moriarty and colleague T. De Angelis at the University of Manchester.



\end{document}